\documentclass[10pt]{article}

\usepackage{amsmath,amssymb,amsthm}
\usepackage{mathtools,nccmath,mathrsfs}
\usepackage[linesnumbered,ruled]{algorithm2e}

\usepackage{enumitem}

\usepackage{graphicx}
\usepackage{subcaption}
\usepackage{xcolor}
\usepackage{url}

\usepackage{comment}

\usepackage{geometry}
\setlength{\oddsidemargin}{0.2in}
\setlength{\evensidemargin}{0.2in}
\setlength{\marginparwidth}{0.07 true in}
\setlength{\topmargin}{-0.5in}
\addtolength{\headsep}{0.25in}
\setlength{\textheight}{8.5 true in}
\setlength{\textwidth}{6.35 true in}
  
\linespread{1.1}

\usepackage[noblocks]{authblk}

\DeclareMathOperator*{\argmin}{arg\,min}

\newtheorem{theorem}{Theorem}
\newtheorem{lemma}{Lemma}
\newtheorem{proposition}{Proposition}

\theoremstyle{definition}
\newtheorem{definition}{Definition}
\newtheorem{assumption}{Assumption}

\theoremstyle{remark}
\newtheorem{remark}{Remark}

\allowdisplaybreaks

\title{\bf
Communication-Efficient Distributed SGD with Compressed Sensing
}

\author[1]{Yujie Tang}
\author[1]{Vikram Ramanathan}
\author[2]{Junshan Zhang}
\author[1]{Na Li}

\affil[1]{\small School of Engineering and Applied Sciences,
Harvard University}
\affil[2]{\small School of Electrical, Computer and Energy Engineering, Arizona State University}

\date{}

\begin{document}

\maketitle

\begin{abstract}

We consider large scale distributed optimization over a set of edge devices connected to a central server, where the limited communication bandwidth between the server and edge devices imposes a significant bottleneck for the optimization procedure. Inspired by recent advances in federated learning, we propose a distributed stochastic gradient descent (SGD) type algorithm that exploits the sparsity of the gradient, when possible, to reduce communication burden. At the heart of the algorithm is to use compressed sensing techniques for the compression of the local stochastic gradients at the device side;  and at the server side, a sparse approximation of the global stochastic gradient is recovered from the noisy aggregated compressed local gradients. We conduct theoretical analysis on the convergence of our algorithm in the presence of noise perturbation incurred by the communication channels, and also conduct numerical experiments to corroborate its effectiveness.

\end{abstract}

\section{Introduction}\label{sec:intro}
Large-scale distributed stochastic optimization plays a fundamental role in the recent advances of machine learning, allowing models with vast sizes to be trained on massive datasets by multiple machines. In the meantime, the past few years have witnessed an explosive growth of networks of  IoT devices such as smart phones, self-driving cars, robots, unmanned aerial vehicles (UAVs), etc., which are capable of data collection and processing for many learning tasks. In many of these applications, due to privacy concerns, it is preferable that the local edge devices learn the model by cooperating with the central server but without sending their own data to the server. Moreover, the communication between the edge devices and the server is often through wireless channels, which are  lossy and unreliable in nature and have limited bandwidth, imposing significant challenges, especially for large dimensional problems.

To address the communication bottlenecks, researchers have investigated communication-efficient distributed optimization methods for large-scale problems, for both the device-server setting~\cite{mcmahan2017communication,magnusson2018communication} and the peer-to-peer setting~\cite{lee2018finite,reisizadeh2019exact}.
In this paper, we consider the device-server setting where a group of edge devices are coordinated by a central server.

Most existing techniques for the device-server setting can be classified into two categories. The first category aims to reduce the number of communication rounds, based on the idea that each edge device runs multiple local SGD steps in parallel before sending the local updates to the server for aggregation. This approach has also been called \texttt{FedAvg}~\cite{mcmahan2017communication} in federated learning and convergence has been studied in~\cite{stich2018local,wang2018cooperative,yu2019parallel}.
Another line of work investigates lazy/adaptive upload of information, i.e., local gradients are uploaded only when found to be informative enough~\cite{sun2020lazily}.

The second category focuses on efficient  compression of gradient information transmitted from edge devices to the server. Commonly adopted compression techniques include \emph{quantization}~\cite{alistarh2017qsgd,bernstein2018signsgd,magnusson2020maintaining} and \emph{sparsification}~\cite{stich2018sparsified,alistarh2018convergence,junshan2021federated}. These techniques can be further classified according to whether the gradient compression yields biased \cite{alistarh2017qsgd, junshan2021federated} or unbiased \cite{bernstein2018signsgd,alistarh2018convergence} gradient estimators. To handle the bias and boost convergence, \cite{stich2018sparsified,karimireddy2019error} introduced the error feedback method that accumulates and corrects the error caused by gradient compression at each step. 

Two recent papers~\cite{ivkin2019communication,rothchild2020fetchsgd} employ sketching methods for gradient compression. Specifically, each device compresses its local stochastic gradient by count sketch~\cite{charikar2002finding} via a common sketching operator; and the server recovers the indices and the values of large entries of the aggregated stochastic gradient from the gradient sketches. However, theoretical guarantees of count sketch were developed for recovering one \textit{fixed} signal by randomly generating a sketching operator from a given probability distribution. During SGD, gradient signals are constantly changing, making it impractical to generate a new sketch operator for every SGD iteration. Thus the papers apply a single sketching operator to all the gradients through the optimization procedure, while sacrificing theoretical guarantees. Further, there is a limited understanding of the performance when there is transmission error/noise of the uploading links. 

\vspace{8pt}
\noindent\textbf{Our Contributions.} We propose a distributed SGD-type algorithm that employs compressed sensing for gradient compression. Specifically, 
we adopt compressed sensing techniques for the compression of local stochastic gradients at the device side, and the reconstruction of the aggregated stochastic gradients at the server side.
The use of compressed sensing enables the server to approximately identify the top entries of the \emph{aggregated} gradient without querying directly each local gradient. Our algorithm also integrates error feedback strategies \emph{at the server side} to handle the bias introduced by compression, while keeping the edge devices to be \emph{stateless}. We provide convergence analysis of our algorithm in the presence of additive noise incurred by the uploading communication channels, and conduct numerical experiments that justify the effectiveness of our algorithm.

Besides the related work discussed above, it is worth noting that a recent paper~\cite{cai2020zeroth} uses compressed sensing for zeroth-order optimization, which exhibits a mathematical structure similar to this study. However, \cite{cai2020zeroth} considers the centralized setting and only establishes convergence to a neighborhood of the minimizer.

\vspace{8pt}
\noindent{\bf Notations:} For $x\in\mathbb{R}^d$, $\|x\|_p$ denotes its $\ell_p$-norm, and $x^{[K]}\in\mathbb{R}^d$ denotes its best-$K$ approximation, i.e., the vector that keeps the top $K$ entries of $x$ in magnitude with other entries set to~$0$.

\section{Problem Setup}
Consider a group of $n$ edge devices and a server. Each device $i$ is associated with a differentiable local objective function $f_i:\mathbb{R}^d\rightarrow\mathbb{R}$, and is able to query a stochastic gradient $\mathsf{g}_i(x)$ such that $\mathbb{E}[\mathsf{g}_i(x)]=\nabla f_i(x)$.
Between each device and the server are an uploading communication link and a broadcasting communication link.
The goal is to solve
\begin{equation}\label{eq:main_problem}
\min_{x\in\mathbb{R}^d}\ \ 
f(x)\coloneqq \frac{1}{n}\sum\nolimits_{i=1}^n f_i(x)
\end{equation}
through queries of stochastic gradients at each device and exchange of information between the server and each device.

One common approach for our problem setup is the stochastic gradient descent (SGD) method: For each time step $t$, the server first broadcasts the current iterate $x(t)$ to all devices, and then each device produces a stochastic gradient $g_i(t)=\mathsf{g}_i(x(t))$ and uploads it to the server, after which the server updates $x(t\!+\!1)=x(t)-\eta\cdot\frac{1}{n}\sum_i g_i(t)$. However, as the server needs to collect local stochastic gradients from each device at every iteration, the vanilla SGD may encounter significant bottleneck imposed by the uploading links if $d$ is very large. This issue may be further exacerbated if the server and the devices are connected via lossy wireless networks of limited bandwidth, which is the case for many IoT applications.

In this work, we investigate the situation where the communication links, particularly the uploading links from each edge device to the server, have limited bandwidth that can significantly slow down the whole optimization procedure; the data transmitted through each uploading link may also be corrupted by noise. Our goal is to develop an SGD-type algorithm for solving~\eqref{eq:main_problem} that achieves better communication efficiency over the uploading links.

\section{Algorithm}

Our algorithm is outlined in Algorithm~\ref{alg:main},
which is based on the SGD method with the following major ingredients:
\begin{enumerate}[leftmargin=0pt,topsep=2pt,itemindent=15pt,labelwidth=6pt,labelsep=7pt,listparindent=15pt,itemsep=2pt]
\item Compression of local stochastic gradients using compressed sensing techniques. Here each edge device compresses its local gradient by $y_i(t)=\Phi g_i(t)$ before uploading it to the server. The matrix $\Phi\in\mathbb{R}^{Q\times d}$ is called the \emph{sensing matrix}, and its number of rows $Q$ is strictly less than the number of columns $d$. As a result, the communication burden of uploading the local gradient information can be reduced.

We emphasize that Algorithm~\ref{alg:main} employs the \emph{for-all} scheme of compressed sensing, which allows one $\Phi$ to be used for the compression of all local stochastic gradients (see Section~\ref{subsec:compressed_sensing} for more details on the \emph{for-each} and the \emph{for-all} schemes).

After collecting the compressed local gradients and obtaining $\frac{1}{n}\sum_{i=1}^n y_i(t)$ (corrupted by communication channel noise), the server recovers a vector $\Delta(t)$ by a compressed sensing algorithm, which will be used for updating $x(t)$.

\item Error feedback of compressed gradients. In general, the compressed sensing reconstruction 
will introduce a nonzero bias in the SGD iterations that hinders convergence. To handle this bias, we adopt the error feedback method in~\cite{stich2018sparsified,karimireddy2019error} and modify it similarly as FetchSGD~\cite{rothchild2020fetchsgd}. The resulting error feedback procedure is done purely at the server side without knowing the true aggregated stochastic gradients.
\end{enumerate}

Note that the aggregated vector $\tilde{y}(t)$ is corrupted by additive noise $w(t)$ from the uploading links. This noise model incorporates a variety of communication schemes, including digital transmission with quantization, and over-the-air transmission for wireless multi-access networks~\cite{junshan2021federated}.

\begin{algorithm}[t]
\DontPrintSemicolon
\textbf{Input}: sparsity level $K$, size of sensing matrix $Q \!\times\! d$, step size $\eta$, number of iterations $T$, initial point $x_0$ \;
Initialize: $x(1)=x_0$, $\varepsilon(1)=0$\;
The server generates the sensing matrix $\Phi\in\mathbb{R}^{Q\times d}$ and sends it to every edge device\;
\For{$t=1,2\ldots,T$}{
The server sends $x(t)$ to every edge device\;
\ForEach{device $i=1,\ldots,n$}{
Device $i$ samples a stochastic gradient $g_i(t)=\mathsf{g}_i(x(t))$\;
Device $i$ constructs $y_i(t)=\Phi g_i(t)\in\mathbb{R}^Q$\;
Device $i$ sends $y_i(t)$ back to the server\;
}
The server receives $\tilde{y}(t)=\frac{1}{n}\sum_{i=1}^n y_i(t)+w(t)$, where $w(t)$ denotes additive noise incurred by the communication channels\;
The server computes $z(t)=\eta \tilde{y}(t)+\varepsilon(t)$\;
The server reconstructs $\Delta(t)=\mathcal{A}(z(t);\Phi)$, where $\mathcal{A}(z(t);\Phi)$ denotes the output of the compressed sensing algorithm of choice

The server updates $x(t\!+\!1)=x(t)-\Delta(t)$\;
The server updates $\varepsilon(t\!+\!1)=z(t)-\Phi\Delta(t)$\;
}
\caption{SGD with Compressed Sensing}
\label{alg:main}
\end{algorithm}

We now provide more details on our algorithm design.

\subsection{Preliminaries on Compressed Sensing}\label{subsec:compressed_sensing}

Compressed sensing \cite{candes2008introduction} is a technique that allows efficient sensing and reconstruction of an approximately sparse signal. Mathematically, in the sensing step, a signal $x\in\mathbb{R}^d$ is observed through linear measurement $y=\Phi x+w$, where $\Phi\in\mathbb{R}^{Q\times d}$ is a pre-specified sensing matrix with $Q<d$, and $w\in\mathbb{R}^Q$ is additive noise. Then in the reconstruction step, one recovers the original signal $x$ by approximately solving
\begin{align}
\hat{x}=\,&\argmin\nolimits_{z}\, \|z\|_0
\quad\textrm{s.t.}\ \ y=\Phi z,
&
\textrm{($w=0$)}
\label{eq:comp_sens_form1}
\\
\hat{x}=\,&\argmin\nolimits_{z}\, \mfrac{1}{2}\|y \!-\! \Phi z\|_2^2\quad
\textrm{s.t.}\ \ \|z\|_0\leq K,
&
\textrm{($w\neq 0$)}
\label{eq:comp_sens_form2}
\end{align}
where $K$ restricts the number of nonzero entries in $\hat{x}$.

Both~\eqref{eq:comp_sens_form1} and~\eqref{eq:comp_sens_form2} are NP-hard nonconvex problems\cite{natarajan1995sparse,davis1997adaptive}
, and researchers have proposed various compressed sensing algorithms for obtaining approximate solutions. As discussed below, the reconstruction error $\|\hat{x}-x\|$ will heavily depend on i) the design of the sensing matrix $\Phi$, and ii) whether the signal $x$ can be well approximated by a sparse vector.

\vspace{8pt}
\noindent\textbf{Design of the sensing matrix $\Phi$.}
Compressed sensing algorithms can be categorized into two schemes~\cite{gilbert2010sparse}: i)~the \emph{for-each} scheme, in which a probability distribution over sensing matrices is designed to provide desired reconstruction for a fixed signal, and every time a new signal is to be measured and reconstructed, one needs to randomly generate a new $\Phi$; ii)~the \emph{for-all} scheme, in which a single $\Phi$ is used for the sensing and reconstruction of all possible signals.\footnote{A more detailed explanation of the two schemes can be found in Appendix~\ref{appendix:two_schemes}.}
We mention that count sketch is an example of a for-each scheme algorithm.
In this paper, we choose the for-all scheme so that the server doesn't need to send a new matrix to each device per iteration.

To ensure that the linear measurement $y=\Phi x$ can discriminate approximately sparse signals, researchers have proposed the \emph{restricted isometry property} (RIP) \cite{candes2008introduction} as a condition on~$\Phi$:

\begin{definition}\label{def:RIP}
We say that $\Phi\in\mathbb{R}^{Q\times d}$ satisfies the $(K,\delta)$-restricted isometry property, if $(1\!-\!\delta)\|x\|_2^2\leq \|\Phi x\|_2^2\leq (1\!+\!\delta)\|x\|_2^2$ for any $x\in\mathbb{R}^d$ that has at most $K$ nonzero entries.
\end{definition}

The restricted isometry property on $\Phi$ is fundamental for analyzing the reconstruction error of many compressed sensing algorithms under the for-all scheme~\cite{foucart2012sparse}.

\vspace{8pt}
\noindent\textbf{Metric of sparsity.} The classical metric of sparsity is the $\ell_0$ norm defined as the number of nonzero entries. However, for our setup, the vectors to be compressed can only be approximately sparse in general, which cannot be handled by the $\ell_0$ norm as it is not stable under small perturbations. Here, we adopt the following sparsity metric from~\cite{lopes2016unknown}:\footnote{Compared to~\cite{lopes2016unknown}, we add a scaling factor $d^{-1}$ so that $\operatorname{sp}(x)\!\in\!(0,1]$.}
\begin{equation}
\operatorname{sp}(x)
\coloneqq \|x\|_1^2/(\|x\|_2^2\cdot d),
\qquad x\in\mathbb{R}^d\backslash\{0\}.
\end{equation}
The continuity of $\operatorname{sp}(x)$ indicates that $\operatorname{sp}(x)$ is robust to small perturbations on $x$, and it can be shown that $\operatorname{sp}(x)$ is \emph{Schur-concave}, meaning that it can characterize \emph{approximate sparsity} of a signal. $\operatorname{sp}(x)$ has also been used in
\cite{lopes2016unknown} for performance analysis of compressed sensing algorithms. 

\subsection{Details of Algorithm Design}\label{subsec:algorithm_design}

\noindent\textbf{Generation of $\Phi$. } As mentioned before, we choose compressed sensing under the \emph{for-all} scheme for gradient compression and reconstruction.
We require that the sensing matrix $\Phi\in\mathbb{R}^{Q\times d}$ have a low storage cost, since it will be transmitted to and stored at each device; $\Phi$ should also satisfy RIP so that the compressed sensing algorithm $\mathcal{A}$ has good reconstruction performance. The following proposition suggests a storage-friendly approach for generating matrices satisfying RIP.

\begin{proposition}[\cite{haviv2017restricted}]\label{prop:subsampled_Fourier}
Let $B\in\mathbb{R}^{d\times d}$ be an orthogonal matrix with entries of absolute values $O(1/\sqrt{d})$, and let $\delta>0$ be sufficiently small. For some $Q = \tilde{O}(\delta^{-2}K\log^2
K\log d)$,\footnote{
The $\tilde{O}$ notation hides logarithm dependence on $1/\delta$.
}
let $\Phi\in\mathbb{R}^{Q\times d}$ be a matrix whose $Q$ rows are chosen uniformly and independently from the rows of $B$, multiplied by $\sqrt{d/Q}$. Then, with high
probability, $\Phi$ satisfies the $(K,\delta)$-RIP.
\end{proposition}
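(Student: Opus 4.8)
This is the sharpest known quantitative form of the statement that a rescaled uniform row‑subsample of a flat orthogonal matrix satisfies the RIP; the plan follows the empirical‑process route of Rudelson--Vershynin and Cand\`es--Tao, combined with the multi‑scale covering‑number refinement that produces the $\log^2 K\log d$ factor (rather than a higher power of $\log d$). \emph{Step 1 (reduction to a uniform operator‑norm bound).} By Definition~\ref{def:RIP}, $\Phi$ has the $(K,\delta)$‑RIP if and only if $\max_{|S|=K}\|\Phi_S^\top\Phi_S-I_K\|_{\mathrm{op}}\le\delta$, where $\Phi_S\in\mathbb{R}^{Q\times K}$ collects the columns of $\Phi$ indexed by $S$. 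Write the subsampled, rescaled rows of $\Phi$ as $\phi_j=\sqrt{d/Q}\,b_{k_j}$, with $k_1,\dots,k_Q$ i.i.d.\ uniform on $\{1,\dots,d\}$ and $b_1^\top,\dots,b_d^\top$ the rows of $B$. Setting $v_j^{(S)}:=\sqrt{d}\,(b_{k_j})_S\in\mathbb{R}^K$, one has $\Phi_S^\top\Phi_S=\frac1Q\sum_{j=1}^Q v_j^{(S)}(v_j^{(S)})^\top$, and the hypotheses on $B$ (orthogonality plus flatness) give $\mathbb{E}[v_j^{(S)}(v_j^{(S)})^\top]=I_K$ together with the deterministic bounds $\|v_j^{(S)}\|_\infty=O(1)$ and $\|v_j^{(S)}\|_2^2=O(K)$. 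Equivalently, with $D_{K,d}:=\{x\in\mathbb{R}^d:\|x\|_2=1,\ \|x\|_0\le K\}$, the RIP constant equals $\sup_{x\in D_{K,d}}\bigl|\,\|\Phi x\|_2^2-\|x\|_2^2\,\bigr|=\sup_{x\in D_{K,d}}\bigl|\sum_{j=1}^Q\langle\phi_j,x\rangle^2-\|x\|_2^2\bigr|$, so it suffices to bound the expectation of this supremum and then show it concentrates.

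\emph{Step 2 (symmetrization and chaining).} By the standard symmetrization inequality, the expected supremum is, up to a universal constant, $\mathbb{E}_{\phi,\varepsilon}\sup_{x\in D_{K,d}}\bigl|\sum_j\varepsilon_j\langle\phi_j,x\rangle^2\bigr|$ with $\varepsilon_j$ i.i.d.\ Rademacher. Conditioned on the $\phi_j$, this is a Rademacher process in $x$, sub‑Gaussian with respect to the pseudometric $\rho(x,x')=\bigl(\sum_j(\langle\phi_j,x\rangle^2-\langle\phi_j,x'\rangle^2)^2\bigr)^{1/2}$, so Dudley's entropy bound controls its conditional expectation by $C\int_0^{\,\mathrm{diam}_\rho}\sqrt{\log N(D_{K,d},\rho,u)}\,du$. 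Using $\langle\phi_j,x\rangle^2-\langle\phi_j,x'\rangle^2=\langle\phi_j,x-x'\rangle\langle\phi_j,x+x'\rangle$ one gets $\rho(x,x')\lesssim\|x-x'\|_2\cdot\bigl(\sup_{z\in D_{2K,d}}\|\Phi z\|_2\bigr)\cdot\max_j\max_{z\in D_{2K,d}}|\langle\phi_j,z\rangle|$. The factor $\sup_z\|\Phi z\|_2$ is $(1+\text{RIP constant})^{1/2}$, so the chaining metric is coupled to the very quantity being estimated, and the whole argument produces a quadratic inequality of the shape $E\lesssim A\sqrt{E+1}$ with $A^2$ carrying the relevant log‑factors divided by $Q$; solving it yields $E\lesssim A^2$, hence $E\le\delta/2$ once $Q$ exceeds the stated threshold.

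\emph{Step 3 (the entropy estimate, the crux, and concentration).} One bounds $\log N(D_{K,d},\|\cdot\|_2,u)$ at two scales, using $D_{K,d}\subseteq\sqrt{K}\,\mathrm{conv}\{\pm e_1,\dots,\pm e_d\}\cap B_2^d$. At coarse scales, Maurey's empirical‑approximation lemma shows each point of $\sqrt K\,B_1^d$ is $u$‑close in $\ell_2$ to an $m$‑term average of signed scaled coordinate vectors with $m=O(K/u^2)$, giving $\log N\lesssim (K/u^2)\log d$; at fine scales one uses the volumetric bound $(C/u)^K$ per $K$‑dimensional coordinate face times $\binom{d}{K}\le(ed/K)^K$ faces, giving $\log N\lesssim K\log(Cd/(Ku))$. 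Feeding $\min$ of these two bounds into Dudley's integral, with the crossover near $u\sim 1/K$, makes the coarse part contribute $\sqrt{K\log d}\cdot\log K$ and the fine part negligible — this is exactly the balancing that improves on cruder single‑scale estimates and yields the $\sqrt{K\log^2 K\log d}$ geometry, so $E\le\delta/2$ whenever $Q\ge c\,\delta^{-2}K\log^2K\log d$. Finally, to pass from expectation to high probability, apply a Talagrand‑type concentration inequality for suprema of empirical processes of the bounded rank‑one matrices $v_j^{(S)}(v_j^{(S)})^\top$ (using $\|v_j^{(S)}\|_\infty=O(1)$, $\|v_j^{(S)}\|_2^2=O(K)$): the supremum exceeds $\delta$ with probability $\exp(-\Omega(\delta^2Q/K))=d^{-\Omega(1)}$ for the stated $Q$, and enlarging the absolute constant drives the failure probability below any prescribed inverse polynomial in $d$.

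\emph{Main obstacle.} The delicate part is Steps~2--3: the chaining metric $\rho$ is random and entangled with the RIP constant, which forces the Rudelson--Vershynin‑style fixed‑point (bootstrap) argument, and extracting the sharp exponent $\log^2K\log d$ requires the coarse/fine covering estimates (Maurey at large scales, volume at small scales) to be traded off precisely. A direct matrix‑Bernstein bound on $\frac1Q\sum_j\bigl(v_j^{(S)}(v_j^{(S)})^\top-I_K\bigr)$ will not suffice: since $\|v_j^{(S)}(v_j^{(S)})^\top\|_{\mathrm{op}}$ can be of order $K$, it would demand $Q\gtrsim\delta^{-2}K^2$ even before the union bound over $S$, and it is exactly the empirical‑process/chaining viewpoint that removes this quadratic loss in $K$.
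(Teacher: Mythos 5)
You are reproving an external result here: the paper offers no proof of Proposition~\ref{prop:subsampled_Fourier}, it simply imports it from \cite{haviv2017restricted}, so the only question is whether your sketch actually delivers the stated bound $Q=\tilde{O}(\delta^{-2}K\log^2K\log d)$. It does not. Steps 2--3 are essentially the Rudelson--Vershynin symmetrization/chaining argument, and that route is known to lose additional logarithmic factors: when you cover $D_{K,d}$ by Maurey's empirical method, the approximation error must be measured in the norm $\|x\|_X=\max_{j\le Q}|\langle\phi_j,x\rangle|$ (a maximum of $Q$ linear functionals), and the Hoeffding/sub-Gaussian step in Maurey then gives $\log N\lesssim (K/u^2)\log Q\,\log d$, not $(K/u^2)\log d$. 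Carrying that through Dudley's integral and the bootstrap yields a requirement of the form $Q\gtrsim\delta^{-2}K\log^2 K\log d\,\log(K\log d)$ (the RV-type bound), not the Haviv--Regev exponent. The improvement to $\log^2K\log d$ is precisely the hard content of \cite{haviv2017restricted}; it is not obtained by balancing Maurey against volumetric estimates in a single Dudley integral (that balancing is already what RV did), but by a genuinely different, more delicate counting/iteration argument in the spirit of Bourgain, which your sketch asserts rather than supplies. So the crux you flag as ``the delicate part'' is exactly the part that is missing.

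There is also an internal inconsistency in Step 2. Your bound $\rho(x,x')\lesssim\|x-x'\|_2\cdot\bigl(\sup_{z\in D_{2K,d}}\|\Phi z\|_2\bigr)\cdot\max_j\sup_{z\in D_{2K,d}}|\langle\phi_j,z\rangle|$ pushes the chaining onto the Euclidean metric, for which the covering numbers of $D_{K,d}$ are governed by the volumetric bound; since $\max_j\sup_z|\langle\phi_j,z\rangle|\asymp\sqrt{K/Q}$ for flat rows, this yields $E\lesssim\sqrt{K^2\log d/Q}\,\sqrt{1+E}$, i.e.\ $Q\gtrsim\delta^{-2}K^2\log d$ --- exactly the quadratic-in-$K$ loss you correctly say must be avoided (and the reason a naive matrix-Bernstein bound fails). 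Avoiding it requires chaining with respect to $\|\cdot\|_X$ itself, which is where the $\log Q$ of the previous paragraph enters; your Maurey estimate and your pseudometric bound cannot both be used as written. The concentration step (Talagrand) is fine in outline, but it cannot rescue the expectation bound. In short: the objects and the overall architecture are right, and your argument could plausibly be completed to give RIP with $Q=\tilde{O}(\delta^{-2}K\,\mathrm{polylog}(d))$, but it does not prove the proposition as stated; for that you should either follow the actual argument of \cite{haviv2017restricted} or weaken the claimed dependence on the logarithmic factors.
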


This proposition indicates that, we can choose a ``base matrix'' $B$ satisfying the condition in Proposition~\ref{prop:subsampled_Fourier}, and then randomly choose $Q$ rows to form $\Phi$. In this way, $\Phi$ can be stored or transmitted by merely the corresponding row indices in $B$. Note that Proposition~\ref{prop:subsampled_Fourier} only requires $Q$ to have logarithm dependence on $d$. Candidates of the base matrix $B$ include the discrete cosine transform (DCT) matrix and the Walsh-Hadamard transform (WHT) matrix, as both DCT and WHT and their inverse have fast algorithms of time complexity $O(d\log d)$, implying that multiplication of $\Phi$ or $\Phi^{\!\top}$ with any vector can be finished within $O(d\log d)$ time.

\vspace{8pt}
\noindent\textbf{Choice of the compressed sensing algorithm.} We let $\mathcal{A}$ be the \emph{Fast Iterative Hard Thresholding} (FIHT) algorithm~\cite{wei2014fast}.\footnote{See also Appendix~\ref{appendix:FIHT} for a brief summary.}
Our experiments suggest that FIHT achieves a good balance between computation efficiency and empirical reconstruction error compared to other algorithms we have tried.

We note that FIHT has a tunable parameter $K$ that controls the number of nonzero entries of $\Delta(t)$. This parameter should accord with the sparsity of the vector to be recovered (see Section~\ref{subsec:theory} for theoretical results).
In addition, the server can broadcast the sparse vector $\Delta(t)$ instead of the whole $x(t)$ for the edge devices to update their local copies of $x(t)$, which saves communication over the broadcasting links.

\vspace{8pt}
\noindent\textbf{Error feedback.}
We adopt error feedback to facilitate convergence of Algorithm~\ref{alg:main}. The following lemma verifies that Algorithm~\ref{alg:main} indeed incorporates the error feedback steps in~\cite{karimireddy2019error}; the proof is straightforward which we omit here.
\vspace{-2pt}
\begin{lemma}\label{lemma:error_feedback}
Consider Algorithm~\ref{alg:main}, and suppose $\Phi$ is generated according to Proposition~\ref{prop:subsampled_Fourier}. Then for each $t$, there exist unique $p(t)\in\mathbb{R}^d$ and $e(t)\in\mathbb{R}^d$ satisfying $z(t)=\Phi p(t) + \eta w(t)$, $\varepsilon(t)=\Phi e(t)$, $e(1)=0$ such that
\begin{equation}
\begin{aligned}
p(t) =\ & \eta g(t) + e(t),
\quad
x(t\!+\!1) = x(t)-\Delta(t), \\
\Delta(t) =\ & \mathcal{A}\left(
\Phi p(t) + \eta w(t);\Phi
\right), \\
e(t\!+\!1) =\ & p(t)-\Delta(t)
+\mfrac{\eta Q}{d}\Phi^\top w(t).
\end{aligned}
\end{equation}
where $g(t)\coloneqq \frac{1}{n}\sum\nolimits_{i=1}^n g_i(t)$.
\end{lemma}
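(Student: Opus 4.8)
The plan is to construct the sequences $\{p(t)\}$ and $\{e(t)\}$ explicitly via the very recursion written in the lemma, and then verify by induction on $t$ that they are compatible with the quantities $z(t)$ and $\varepsilon(t)$ produced by Algorithm~\ref{alg:main}; uniqueness is then immediate because the recursion leaves no freedom once $e(1)=0$ is fixed.

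First I would record two elementary identities. Since $y_i(t)=\Phi g_i(t)$ and $g(t)=\frac1n\sum_i g_i(t)$, linearity gives $\frac1n\sum_i y_i(t)=\Phi g(t)$, hence $\tilde y(t)=\Phi g(t)+w(t)$ and $z(t)=\eta\Phi g(t)+\eta w(t)+\varepsilon(t)$. Second --- and this is the only place the hypothesis on $\Phi$ enters --- under the construction of Proposition~\ref{prop:subsampled_Fourier} the rows of $\Phi$ are (distinct) rows of the orthogonal matrix $B$ rescaled by $\sqrt{d/Q}$, so they are pairwise orthogonal with squared norm $d/Q$; consequently $\Phi\Phi^\top=\frac dQ I_Q$, i.e. $\frac Qd\Phi^\top$ is a right inverse of $\Phi$. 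In particular $\Phi\bigl(\tfrac{\eta Q}{d}\Phi^\top w(t)\bigr)=\eta w(t)$, which is exactly the algebraic fact that allows the additive noise term to be absorbed into the range of $\Phi$; this is why the correction added to $e(t+1)$ carries the prefactor $\tfrac{\eta Q}{d}\Phi^\top$.

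Next I would \emph{define} $e(1)=0$, $p(t)=\eta g(t)+e(t)$, and $e(t+1)=p(t)-\Delta(t)+\tfrac{\eta Q}{d}\Phi^\top w(t)$, with $\Delta(t)$ and $w(t)$ as generated by the algorithm. This is a well-posed deterministic recursion, which gives existence and, since every step is forced, uniqueness of $\{p(t)\},\{e(t)\}$ subject to the stated relations. It then remains to prove by induction that $\varepsilon(t)=\Phi e(t)$ for all $t$; once this is known, $z(t)=\Phi p(t)+\eta w(t)$ follows from the first paragraph's identity, and $\Delta(t)=\mathcal A(\Phi p(t)+\eta w(t);\Phi)$ is just the algorithm's definition of $\Delta(t)$, while $x(t+1)=x(t)-\Delta(t)$ is copied verbatim from the algorithm. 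The base case is $\varepsilon(1)=0=\Phi e(1)$. For the inductive step, assume $\varepsilon(t)=\Phi e(t)$; then $z(t)=\eta\Phi g(t)+\eta w(t)+\Phi e(t)=\Phi p(t)+\eta w(t)$, hence
\[
\varepsilon(t+1)=z(t)-\Phi\Delta(t)=\Phi\bigl(p(t)-\Delta(t)\bigr)+\eta w(t)=\Phi\Bigl(p(t)-\Delta(t)+\tfrac{\eta Q}{d}\Phi^\top w(t)\Bigr)=\Phi e(t+1),
\]
closing the induction.

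There is no genuine obstacle here --- hence the paper's remark that the proof is straightforward --- but two points deserve care. One is the right-inverse identity $\Phi\Phi^\top=\frac dQ I_Q$, without which the noise could not be pushed into $\mathrm{range}(\Phi)$ and the exact error-feedback form would fail. The other is to read ``uniqueness'' correctly: $\Phi$ is not injective ($Q<d$), so $p(t)$ and $e(t)$ are \emph{not} the unique preimages of $z(t)-\eta w(t)$ and $\varepsilon(t)$ under $\Phi$; they are unique only as the sequences determined by the displayed relations together with $e(1)=0$.
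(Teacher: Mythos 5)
Your proof is correct and is essentially the straightforward verification the paper has in mind (the paper omits its proof): the only substantive ingredients are the identity $\Phi\Phi^\top=\tfrac{d}{Q}I_Q$, which holds because the (distinct) rows of $\Phi$ are orthogonal rows of $B$ scaled by $\sqrt{d/Q}$ and which lets $\tfrac{\eta Q}{d}\Phi^\top w(t)$ serve as a preimage of $\eta w(t)$, plus the induction showing $\varepsilon(t)=\Phi e(t)$. Your reading of ``uniqueness'' as uniqueness of the sequences generated by the forced recursion with $e(1)=0$ (rather than uniqueness of preimages under the non-injective $\Phi$) is also the intended one, so nothing is missing.
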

\vspace{-2pt}

By comparing Lemma~\ref{lemma:error_feedback} with \cite[Algorithm 2]{karimireddy2019error}, we see that the only difference lies in the presence of communication channel noise $w(t)$ in our setting.
In addition, since error feedback is implemented purely at the server side, the edge devices will be \emph{stateless} during the whole optimization procedure.

\subsection{Theoretical Analysis}\label{subsec:theory}


First, we make the following assumptions on the objective function $f$, the stochastic gradients $g_i(t)$ and the communication channel noise $w(t)$:
\begin{assumption}
The function $f(x)$ is $L$-smooth over $x\in\mathbb{R}^d$, i.e., there exists $L>0$ such that for all $x,y\in\mathbb{R}^n$,
$
\|\nabla f(x)-\nabla f(y)\|_2\leq L \|x-y\|_2.
$
\end{assumption}

\begin{assumption}\label{assumption:var_grad}
Denote $g(t)=\frac{1}{n}\sum_{i=1}^n g_i(t)$. There exists $G>0$ such that
$
\mathbb{E}\big[
\left\|
g(t)-\nabla f(x(t))\right\|_2^2\big]
\leq G^2
$
for all $t$.
\end{assumption}

\begin{assumption}\label{assumption:second_moment_w}
The communication channel noise $w(t)$ satisfies $\mathbb{E}[\|w(t)\|_2^2]\leq \sigma^2$ for each $t$.
\end{assumption}


Our theoretical analysis will be based on the following result on the reconstruction error of FIHT:

\begin{lemma}[{\cite[Corollary I.3]{wei2014fast}}]\label{lemma:FIHT_performance}
Let $K$ be the maximum number of nonzero entries of the output of FIHT. Suppose the sensing matrix $\Phi\in\mathbb{R}^{Q\times d}$ satisfies $(4K,\delta_{4K})$-RIP for sufficiently small $\delta_{4K}$. Then, for any $x\in\mathbb{R}^d$ and $w\in\mathbb{R}^Q$,
\begin{equation}\label{eq:FIHT_performance}
\begin{aligned}
\|\mathcal{A}(\Phi x + w;\Phi)
-x\|_2
\leq 
(C_{\mathcal{A},\mathrm{s}} \!+\! 1)\big\|x \!-\! x^{[K]}\big\|_2
+\mfrac{C_{\mathcal{A},\mathrm{s}}}{\sqrt{K}}\big\|x \!-\! x^{[K]}\big\|_1
+ C_{\mathcal{A},\mathrm{n}} \|w\|_2
\end{aligned}
\end{equation}
where $C_{\mathcal{A},\mathrm{s}}$ and $C_{\mathcal{A},\mathrm{n}}$ are positive constants that depend on $\delta_{4K}$.
\end{lemma}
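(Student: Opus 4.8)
The plan is to reduce the general approximately-sparse, noisy guarantee to the exact $K$-sparse recovery guarantee of FIHT, treating the tail of $x$ as additional measurement noise. Write $e \coloneqq x - x^{[K]}$ for the best-$K$ residual, so that the observation can be recast as $\Phi x + w = \Phi x^{[K]} + \tilde{w}$ with effective noise $\tilde{w} \coloneqq \Phi e + w$. Since $x^{[K]}$ is $K$-sparse and $\Phi$ satisfies $(4K,\delta_{4K})$-RIP, I would invoke the core convergence result for FIHT restricted to $K$-sparse signals to obtain a bound of the form $\|\mathcal{A}(\Phi x^{[K]} + \tilde{w};\Phi) - x^{[K]}\|_2 \le C_{\mathcal{A},\mathrm{s}}\|\tilde{w}\|_2$, where $C_{\mathcal{A},\mathrm{s}}$ depends only on $\delta_{4K}$ and is finite once $\delta_{4K}$ is below the absolute threshold that makes the FIHT iteration contractive.

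First I would establish that core $K$-sparse, noisy bound, which is the substance of \cite[Corollary I.3]{wei2014fast}. FIHT maintains $K$-sparse iterates and interleaves a conjugate-gradient-type step along the current sparse model with a hard-thresholding/restart step; one shows that, restricted to the union of the current support, the previous support and the target support, a set of size at most $4K$, the update is a contraction whenever $\delta_{4K}$ is small, and then unrolls the recursion to get geometric convergence of $\|x_j - x^{[K]}\|_2$ down to a floor proportional to $\|\tilde{w}\|_2$, with the proportionality constant absorbed into $C_{\mathcal{A},\mathrm{s}}$.

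Next I would convert $\|\tilde{w}\|_2$ back into the quantities appearing in the statement. By the triangle inequality $\|\tilde{w}\|_2 \le \|\Phi e\|_2 + \|w\|_2$, and the standard estimate for the action of an RIP matrix on a generic vector, namely splitting $e$ into consecutive blocks of $K$ coordinates sorted by magnitude, applying RIP blockwise, and bounding the tail blocks by $\|e\|_1/\sqrt{K}$, one gets $\|\Phi e\|_2 \le \sqrt{1+\delta_{4K}}\big(\|e\|_2 + \|e\|_1/\sqrt{K}\big)$. Finally, a second triangle inequality $\|\mathcal{A}(\Phi x + w;\Phi) - x\|_2 \le \|\mathcal{A}(\Phi x + w;\Phi) - x^{[K]}\|_2 + \|x^{[K]} - x\|_2$ together with $\|x^{[K]} - x\|_2 = \|e\|_2$ assembles the three terms into the claimed bound, after redefining the constants to absorb the $\sqrt{1+\delta_{4K}}$ factors and setting, e.g., $C_{\mathcal{A},\mathrm{n}} \coloneqq C_{\mathcal{A},\mathrm{s}}$.

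The main obstacle is the first step: the geometric contraction of FIHT under RIP. Unlike plain iterative hard thresholding, FIHT mixes in a momentum/conjugate direction and uses a restart test, so it is not a one-step contraction; one has to monitor a Lyapunov-type quantity across the acceleration steps and argue that the restart enforces enough monotone progress, all while keeping the RIP order at $4K$ rather than a larger multiple of $K$. The remaining ingredients, the RIP-on-tail inequality and the two triangle inequalities, are routine.
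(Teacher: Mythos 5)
This lemma is not proved in the paper at all: it is imported verbatim as \cite[Corollary I.3]{wei2014fast}, so there is no in-paper argument to compare yours against. Your plan is nevertheless a faithful reconstruction of how such a corollary is actually obtained in the compressed sensing literature (and in Wei's paper): prove an exact-sparse noisy guarantee $\|\mathcal{A}(\Phi x^{[K]}+\tilde w;\Phi)-x^{[K]}\|_2\le C\|\tilde w\|_2$ under $(4K,\delta_{4K})$-RIP, then treat the tail $e=x-x^{[K]}$ as extra noise via the standard estimate $\|\Phi e\|_2\le\sqrt{1+\delta}\,\bigl(\|e\|_2+\|e\|_1/\sqrt{K}\bigr)$ (the Needell--Tropp block-splitting argument), and assemble with two triangle inequalities; the resulting constants have exactly the form $(C_{\mathcal{A},\mathrm{s}}+1)$, $C_{\mathcal{A},\mathrm{s}}/\sqrt{K}$, $C_{\mathcal{A},\mathrm{n}}$ claimed in the statement. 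The one caveat is that the genuinely hard step --- the geometric contraction of the accelerated/momentum iteration under RIP of order $4K$, which you correctly identify as the main obstacle --- is only asserted in your sketch; that step is precisely the substance of the cited work, so as a self-contained proof your proposal is incomplete there, but as a derivation of the corollary from that core result it is correct. (A minor descriptive slip: FIHT as used here has no explicit restart test beyond setting $\tau(1)=0$; the analysis controls the momentum step directly rather than through enforced restarts, but this does not affect your reduction.)
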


We are now ready to establish convergence of Algorithm~\ref{alg:main}.
\begin{theorem}\label{theorem:main}
Let $K$ be the maximum number of nonzero entries of the output of FIHT. Suppose the sensing matrix $\Phi\in\mathbb{R}^{Q\times d}$ satisfies $(4K,\delta_{4K})$-RIP for sufficiently small $\delta_{4K}$. Furthermore, assume that
\begin{equation}\label{eq:sparsity_condition}
\operatorname{sp}(p(t))
\leq \gamma\cdot \frac{2 K/d}{[1+C_{\mathcal{A},\mathrm{s}}(3-2K/d)]^2}
\end{equation}
for all $t\geq 1$ for some $\gamma\in(0,1)$, where $p(t)$ is defined in Lemma~\ref{lemma:error_feedback}. Then for sufficiently large $T$, by choosing $\eta=1/(L\sqrt{T})$, we have that
\begin{align*}
\frac{1}{T}\sum_{t=1}^T
\mathbb{E}\!\left[\|\nabla\!f(x(t)\mkern-1mu)\|_2^2\right]
\leq
\frac{6L\!\left(f(x(1)) \!-\! f^\ast\right) + 3G^2}{2\sqrt{T}}
+\frac{3}{T}\!\left[
\frac{\gamma(1\!+\!\gamma)}{(1\!-\!\gamma)^2}G^2
+\frac{ 2\big(C_{\mathcal{A},\mathrm{n}}
\!+\!\sqrt{Q/d}\big)^{\!2}\sigma^2}{1-\gamma}
\right]\!.
\end{align*}
In addition, if $f$ is convex and has a minimizer $x^\ast\in\mathbb{R}^d$, then we further have
\begin{align*}
\mathbb{E}\!\left[f(\bar{x}(t)) \!-\! f^\ast\right]
\leq
\frac{L\|x(1)-x^\ast\|_2^2+G^2/L}{\sqrt{T}}
+
\frac{6}{T}\!\left[
\frac{\gamma(1\!+\!\gamma)G^2}{(1\!-\!\gamma)^2L}
+ \frac{2\big(C_{\mathcal{A},\mathrm{n}}
\!+\!\sqrt{Q/d}\big)^{\!2}\sigma^2}{(1\!-\!\gamma)L}
\right]\!,
\end{align*}
where $\bar{x}(t)\coloneqq \frac{1}{T}\sum_{t=1}^T x(t)$ and $f^\ast\coloneqq f(x^\ast)$.
\end{theorem}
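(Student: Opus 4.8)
\noindent\emph{Proof strategy.}
The plan is to run the error-feedback analysis in the spirit of~\cite{karimireddy2019error}, with the compression operator replaced by the noisy FIHT reconstruction, via a virtual iterate. Let $p(t),e(t)$ be as in Lemma~\ref{lemma:error_feedback} and set $\tilde{x}(t)\coloneqq x(t)-e(t)$. Rearranging the error-feedback identity gives $\Delta(t)=p(t)+\tfrac{\eta Q}{d}\Phi^{\!\top} w(t)-e(t\!+\!1)$, and substituting this together with $p(t)=\eta g(t)+e(t)$ into $x(t\!+\!1)=x(t)-\Delta(t)$ yields the exact recursion
\begin{equation*}
\tilde{x}(t\!+\!1)=\tilde{x}(t)-\eta\big(g(t)+\tfrac{Q}{d}\Phi^{\!\top} w(t)\big),
\end{equation*}
so the virtual iterate performs plain SGD with an effective stochastic gradient whose conditional mean (given the history $\mathcal{F}_t$) is $\nabla f(x(t))$ and whose conditional second moment is at most $\mathbb{E}\|\nabla f(x(t))\|_2^2+G^2+\tfrac{Q}{d}\sigma^2$. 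Here I use that the subsampled-orthogonal $\Phi$ of Proposition~\ref{prop:subsampled_Fourier} satisfies $\Phi\Phi^{\!\top}=\tfrac{d}{Q}I_Q$, hence $\|\tfrac{Q}{d}\Phi^{\!\top} w\|_2=\sqrt{Q/d}\,\|w\|_2$, and that the channel noise is zero-mean and independent across iterations (so the cross-terms with $w(t)$ drop after taking conditional expectations). The problem thus reduces to (i) a descent bound for $f(\tilde{x}(t))$ and (ii) controlling the perturbation $\tilde{x}(t)-x(t)=-e(t)$.

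For (ii) — the crux — I would turn the FIHT guarantee into a genuine contraction. Writing $e(t\!+\!1)=(p(t)-\Delta(t))+\tfrac{\eta Q}{d}\Phi^{\!\top} w(t)$ with $\Delta(t)=\mathcal{A}(\Phi p(t)+\eta w(t);\Phi)$ and applying Lemma~\ref{lemma:FIHT_performance},
\begin{equation*}
\|p(t)-\Delta(t)\|_2\leq (C_{\mathcal{A},\mathrm{s}}\!+\!1)\big\|p(t)-p(t)^{[K]}\big\|_2+\tfrac{C_{\mathcal{A},\mathrm{s}}}{\sqrt{K}}\big\|p(t)-p(t)^{[K]}\big\|_1+C_{\mathcal{A},\mathrm{n}}\,\eta\|w(t)\|_2 .
\end{equation*}
I would bound the two best-$K$ tail norms purely in terms of $\|p(t)\|_1$ and $\|p(t)\|_2$ via elementary inequalities (e.g.\ $\|x-x^{[K]}\|_2\leq \|x\|_1/(2\sqrt{K})$ and $\|x-x^{[K]}\|_1\leq \tfrac{d-K}{d}\|x\|_1$), use $\|x\|_1^2=\operatorname{sp}(x)\cdot d\cdot\|x\|_2^2$, and verify that~\eqref{eq:sparsity_condition} is exactly the condition forcing the signal-dependent part down to $\sqrt{\gamma}\,\|p(t)\|_2$. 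Combined with $\|\tfrac{\eta Q}{d}\Phi^{\!\top} w(t)\|_2=\eta\sqrt{Q/d}\,\|w(t)\|_2$ this gives $\|e(t\!+\!1)\|_2\leq \sqrt{\gamma}\,\|p(t)\|_2+(C_{\mathcal{A},\mathrm{n}}+\sqrt{Q/d})\eta\|w(t)\|_2$. Since $\|p(t)\|_2\leq\|e(t)\|_2+\eta\|g(t)\|_2$ this is a stable linear recursion; unrolling from $e(1)=0$, squaring, taking expectations, and applying Assumptions~\ref{assumption:var_grad}--\ref{assumption:second_moment_w} (with $\mathbb{E}\|g(t)\|_2^2\leq\mathbb{E}\|\nabla f(x(t))\|_2^2+G^2$) bounds $\sum_{t=1}^T\mathbb{E}\|e(t)\|_2^2$ by a $\tfrac{\gamma(1+\gamma)}{(1-\gamma)^2}\eta^2\sum_t\mathbb{E}\|g(t)\|_2^2$-type term plus a $\tfrac{(C_{\mathcal{A},\mathrm{n}}+\sqrt{Q/d})^2}{1-\gamma}\eta^2T\sigma^2$ term (the $\gamma$ versus $\sqrt{\gamma}$ in the denominators comes from the usual $(1\!+\!\beta)/(1\!+\!\beta^{-1})$ splitting used to sum the geometric series).

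To finish: in the non-convex case apply $L$-smoothness to $f(\tilde{x}(t\!+\!1))$, take expectations, insert $\mathbb{E}[g(t)\mid\mathcal{F}_t]=\nabla f(x(t))$ and $\|\nabla f(\tilde{x}(t))-\nabla f(x(t))\|_2\leq L\|e(t)\|_2$, and use Young's inequality to reach
\begin{equation*}
\mathbb{E}f(\tilde{x}(t\!+\!1))\leq \mathbb{E}f(\tilde{x}(t))-\tfrac{\eta}{2}(1\!-\!L\eta)\mathbb{E}\|\nabla f(x(t))\|_2^2+\tfrac{\eta L^2}{2}\mathbb{E}\|e(t)\|_2^2+O\!\big(\eta^2 L(G^2\!+\!\sigma^2)\big);
\end{equation*}
summing over $t=1,\dots,T$ (with $\tilde{x}(1)=x(1)$ and $\mathbb{E}f(\tilde{x}(T\!+\!1))\geq f^\ast$), inserting the bound on $\sum_t\mathbb{E}\|e(t)\|_2^2$, moving all the $\sum_t\mathbb{E}\|\nabla f(x(t))\|_2^2$ terms to the left, and noting the net coefficient is bounded below by a constant times $\eta$ for $T$ large, then dividing through and setting $\eta=1/(L\sqrt{T})$ (so $\eta^2T=1/L^2$, $\eta^3L^2T=\eta$) collapses everything to the stated bound. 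For the convex part I would instead expand $\|\tilde{x}(t\!+\!1)-x^\ast\|_2^2$, use $\langle\nabla f(x(t)),x(t)-x^\ast\rangle\geq f(x(t))-f^\ast$ and $L$-smoothness to handle $\langle\nabla f(x(t)),e(t)\rangle$ and the $f(\tilde{x}(t))$–$f(x(t))$ gap, telescope, bound $\sum_t\mathbb{E}\|e(t)\|_2^2$ as above, absorb $\sum_t\mathbb{E}\|\nabla f(x(t))\|_2^2$ through $\|\nabla f(x)\|_2^2\leq 2L(f(x)-f^\ast)$, and conclude via Jensen's inequality $f(\bar{x}(t))\leq\tfrac1T\sum_t f(x(t))$.

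The main obstacle is the contraction step in the second paragraph: converting the FIHT bound, which mixes $\ell_1$ and $\ell_2$ best-$K$ residuals, into a contraction factor $\sqrt{\gamma}<1$ on $\|p(t)\|_2$, and checking this is exactly consistent with the constant $\gamma\cdot\frac{2K/d}{[1+C_{\mathcal{A},\mathrm{s}}(3-2K/d)]^2}$ in~\eqref{eq:sparsity_condition}. This requires sharp control of $\|x-x^{[K]}\|_1$ and $\|x-x^{[K]}\|_2$ against $\|x\|_1,\|x\|_2$, which is precisely where the Schur-concave measure $\operatorname{sp}(\cdot)$ does its work; the rest is bookkeeping with geometric series and the choice of $\eta$.
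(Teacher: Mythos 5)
Your proposal follows the same architecture as the paper's proof: the virtual iterate $\tilde{x}(t)=x(t)-e(t)$, a contraction bound on the FIHT reconstruction error extracted from~\eqref{eq:sparsity_condition} (the paper's Lemma~\ref{lemma:compressor}), a geometric-series bound on $\frac{1}{T}\sum_t\mathbb{E}[\|e(t)\|_2^2]$ (Lemma~\ref{lemma:err_bound}), and then the standard smooth/convex SGD arguments with $\eta=1/(L\sqrt{T})$. The genuine gap is in your treatment of the channel noise. You keep $w(t)$ inside the virtual-iterate recursion, $\tilde{x}(t\!+\!1)=\tilde{x}(t)-\eta\big(g(t)+\tfrac{Q}{d}\Phi^{\top}w(t)\big)$, and then kill the cross terms by assuming $w(t)$ is zero-mean and independent across iterations; neither property is available, since Assumption~\ref{assumption:second_moment_w} only bounds $\mathbb{E}[\|w(t)\|_2^2]$, so those cross terms do not vanish for free. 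Moreover, even granting zero-mean noise, the quadratic term $\tfrac{\eta^2L}{2}\cdot\tfrac{Q}{d}\mathbb{E}[\|w(t)\|_2^2]$ in your descent step (and its analogue in the convex expansion) survives the telescoping and, after dividing by $\eta T$ with $\eta=1/(L\sqrt{T})$, leaves an additional term of order $(Q/d)\sigma^2/\sqrt{T}$, whereas the stated bounds contain $\sigma^2$ only at order $1/T$. The paper's proof instead uses $\tilde{x}(t\!+\!1)=\tilde{x}(t)-\eta g(t)$, routing the noise entirely through $e(t)$ so that it appears only in the $(C_{\mathcal{A},\mathrm{n}}+\sqrt{Q/d})^2\sigma^2$ term of Lemma~\ref{lemma:err_bound}. (Your recursion is in fact the one literally implied by Lemma~\ref{lemma:error_feedback} as stated, so the discrepancy points at a mismatch between that lemma and the paper's proof; but as a derivation of the displayed inequalities, your plan either needs assumptions the theorem does not make or proves a weaker bound.)

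A second, quantitative issue: combining Lemma~\ref{lemma:FIHT_performance} with $\|x-x^{[K]}\|_2\le\|x\|_1/(2\sqrt{K})$, $\|x-x^{[K]}\|_1\le(1-K/d)\|x\|_1$ and~\eqref{eq:sparsity_condition} yields the factor $\sqrt{\gamma/2}$, not $\sqrt{\gamma}$, in front of $\|p(t)\|_2$. This is not cosmetic: with $\sqrt{\gamma/2}$ one squares via $(a+b)^2\le 2a^2+2b^2$ and gets exactly the coefficient $\gamma$ on $\|p(t)\|_2^2$ and $2\eta^2(C_{\mathcal{A},\mathrm{n}}+\sqrt{Q/d})^2\sigma^2$ on the noise, which is what produces the $\tfrac{1+\gamma}{2}$ contraction on $\mathbb{E}[\|e(t)\|_2^2]$ and the precise constants $\tfrac{\gamma(1+\gamma)}{(1-\gamma)^2}G^2$ and $\tfrac{2(C_{\mathcal{A},\mathrm{n}}+\sqrt{Q/d})^2\sigma^2}{1-\gamma}$ in the statement. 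Starting from $\sqrt{\gamma}\,\|p(t)\|_2$, any Young splitting gives a coefficient strictly larger than $\gamma$, so the recursion is still stable for $\gamma<1$ but cannot recover the theorem's constants. Redo the arithmetic in your contraction step and adopt the paper's noise-free virtual iterate (or explicitly justify it from the error-feedback identities), and the rest of your outline goes through as in the paper.
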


Proof of Theorem~\ref{theorem:main} is given in Appendix~\ref{appendix:proof}.

\begin{remark}\label{remark:sparsity_p}
Theorem~\ref{theorem:main} requires $\operatorname{sp}(p(t))$ to remain sufficiently low. This condition is hard to check and can be violated in practice (see Section~\ref{sec:simulation}). However, our numerical experiments seem to suggest that even if the condition~\eqref{eq:sparsity_condition} is violated, Algorithm~\ref{alg:main} may still exhibit relatively good convergence behavior when the gradient $g(t)$ itself has relatively low sparsity level. Theoretical investigation on these observations will be interesting future directions.
\end{remark}

\section{Numerical Results}\label{sec:simulation}

\subsection{Test Case with Synthetic Data}

We first conduct numerical experiments on a test case with synthetic data. Here we set the dimension to be $d=2^{14}$ and the number of edge devices to be $n=20$. The local objective functions are of the form $f_i(x)=\frac{1}{2}(x-x_0)^\top A_i(x-x_0)$, where each $A_i\in\mathbb{R}^{d\times d}$ is a diagonal matrix, and we denote $A=\frac{1}{n}\sum_i A_i$. We generate $A_i$ such that the diagonal entries of $A$ is given by $A_{jj} = e^{-j/300}+0.001$ for each $j=1,\ldots,d$ while the diagonal of each $A_i$ is dense. We also let $\mathsf{g}_i(x)$ give approximately sparse stochastic gradients for every $x\in\mathbb{R}^d$. We refer to Appendix~\ref{appendix:numerical} for details on the test case.

We test three algorithms: the uncompressed vanilla SGD, Algorithm~\ref{alg:main}, and SGD with count sketch. The SGD with count sketch just replaces the gradient compression and reconstruction of Algorithm~\ref{alg:main} by the count sketch method \cite{charikar2002finding}.
We set $K=500$ for both Algorithm~\ref{alg:main} and SGD with count sketch. For Algorithm~\ref{alg:main}, we generate $\Phi$ from the WHT matrix, and uses the FFHT library~\cite{andoni2015practical} for fast WHT. We set $T=1000$, $\eta=1/\sqrt{T}$ and the initial point to be the origin for all three algorithms.

\begin{figure*}[htbp]
\centering
\subcaptionbox{Convergence of three algorithms ($w(t)=0$).
\label{fig:synthetic_convergence_3algs}}[.3\linewidth]
{
\includegraphics[width=\linewidth]{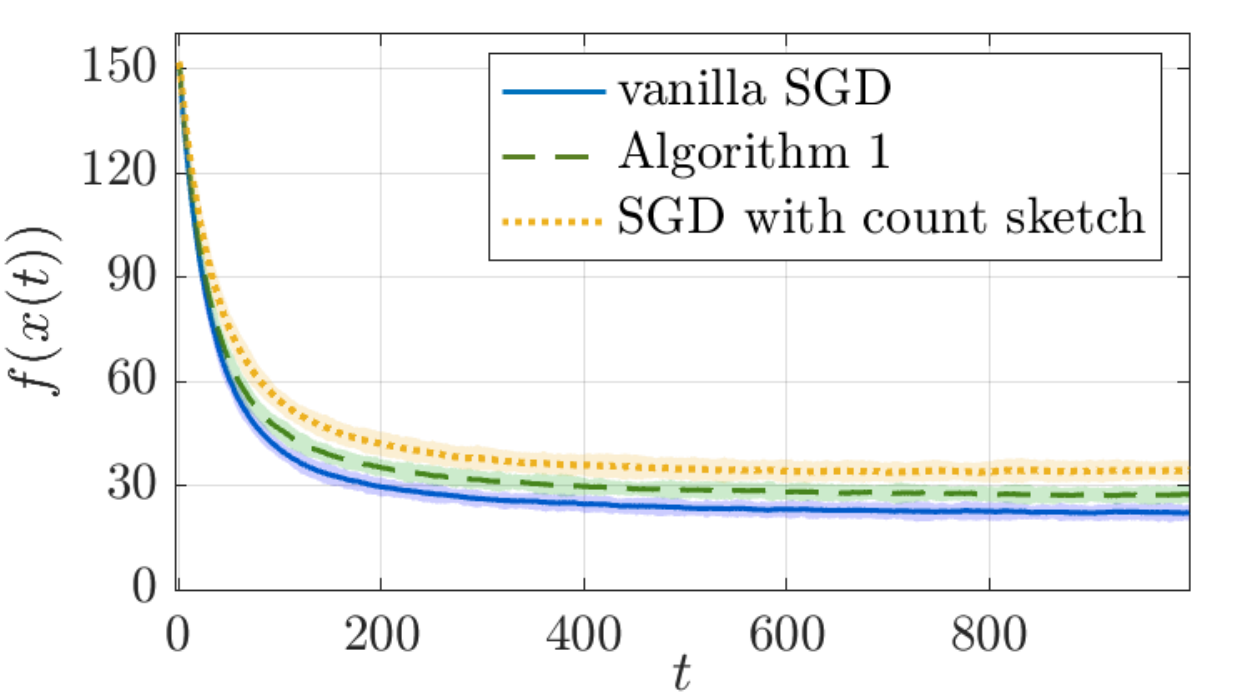}
}
\hspace{7pt}
\subcaptionbox{Evolution of $\operatorname{sp}(p(t))$ and $\operatorname{sp}(g(t))$ for Algorithm~\ref{alg:main} ($w(t)=0$).
\label{fig:synthetic_sparsity_level}}[.3\linewidth]
{
\includegraphics[width=\linewidth]{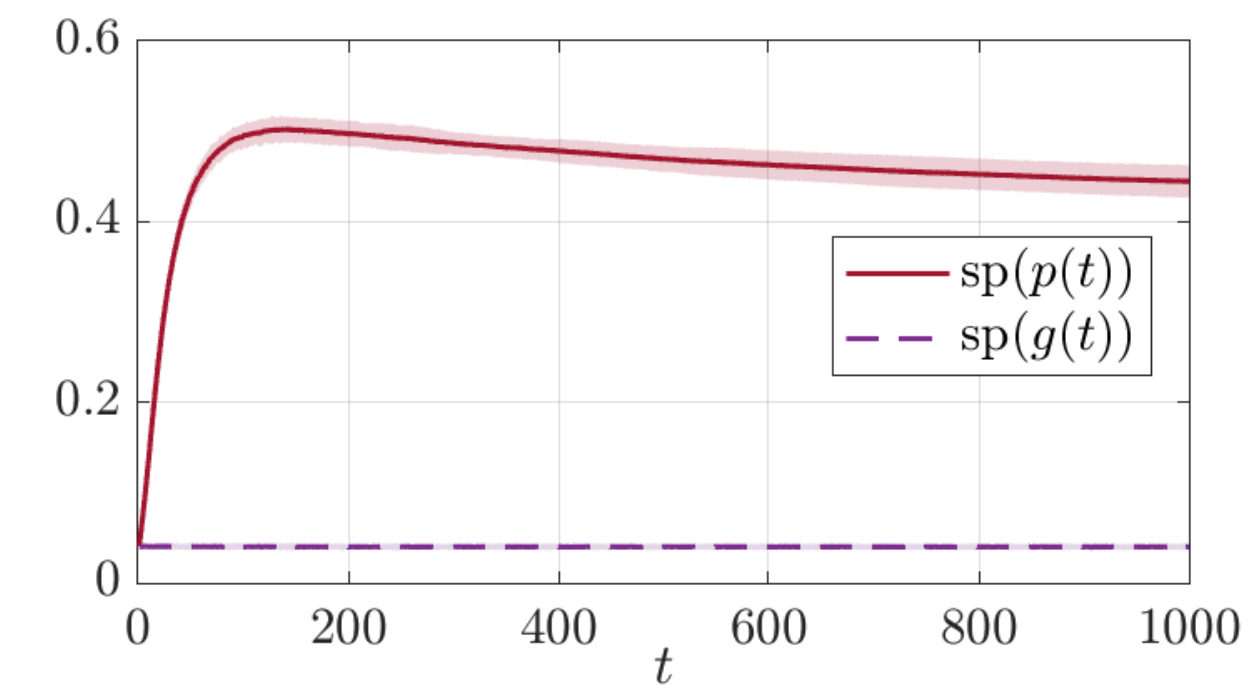}
}
\hspace{7pt}
\subcaptionbox{Convergence of Algorithm~\ref{alg:main} with different amplitudes of $w(t)$.\label{fig:synthetic_communication_noise}}[.3\linewidth]
{
\includegraphics[width=\linewidth]{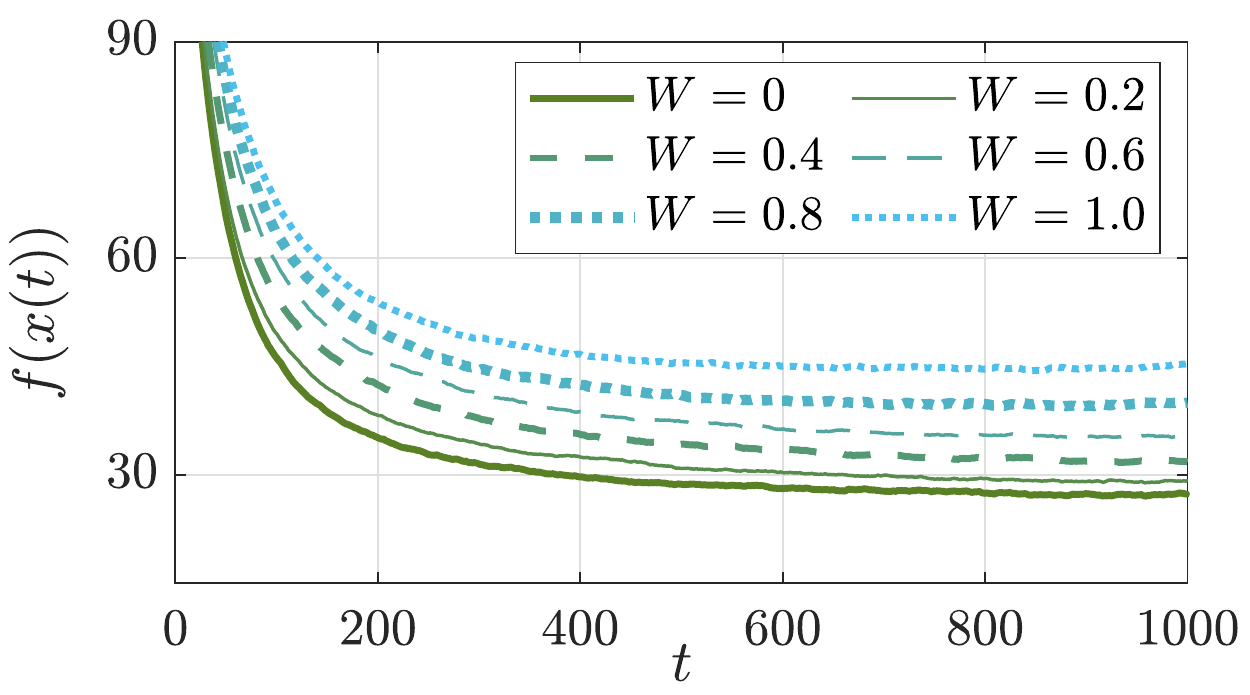}
}
\caption{Curves represent the average of $50$ random trials, and light-colored shades represent $3$-sigma confidence intervals.}
\vspace{-8pt}
\end{figure*}

Figure~\ref{fig:synthetic_convergence_3algs} illustrates the convergence of the three algorithms without communication channel error (i.e., $w(t)=0$). For Algorithm~\ref{alg:main}, we set $Q=5000$ (the compression rate $d/Q$ is $3.28$), and for SGD with count sketch we set the sketch size to be $16\times 500$ (the compression rate is $d/(16\times 500)=2.05$). We see that Algorithm~\ref{alg:main} has better convergence behavior while also achieves higher compression rate compared to SGD with count sketch. Our numerical experiments suggest that for approximately sparse signals, FIHT can achieve higher reconstruction accuracy and more aggressive compression than count sketch, and for signals that are not very sparse, FIHT also seems more robust.

Figure~\ref{fig:synthetic_sparsity_level} shows the evolution of $\operatorname{sp}(p(t))$ and $\operatorname{sp}(g(t))$ for Algorithm~\ref{alg:main}. We see that $\operatorname{sp}(p(t))$ is small for the first few iterations, and then increases and stabilizes around $0.5$, which suggests that the condition~\eqref{eq:sparsity_condition} is likely to have been violated for large $t$. On the other hand, Fig.~\ref{fig:synthetic_convergence_3algs} shows that Algorithm~\ref{alg:main} can still achieve relatively good convergence behavior.
This indicates a gap between the theoretical results in Section~\ref{subsec:theory} and the empirical results, and suggests our analysis could be improved. We leave relevant investigation as future work.

Figure~\ref{fig:synthetic_communication_noise} illustrates the convergence of Algorithm~\ref{alg:main} with different levels of communication channel noise. Here the entries of $w(t)$ are i.i.d. sampled from $\mathcal{N}(0,W^2)$ with $W\in\{0.2,0.4,0.6,0.8,1.0\}$. We see that the convergence of Algorithm~\ref{alg:main} gradually deteriorates as $W$ increases, suggesting its robustness against communication channel noise.

\subsection{Test Case of Federated Learning with CIFAR-10 Dataset}

We implement our algorithm on a residual network with 668426 trainable parameters in two different settings. We primarily use Tensorflow and MPI in the implementation of these results (details about the specific experimental setup can be found in Appendix \ref{appendix:numerical}). In addition, we shall only present upload compression results here; download compression is not considered to be as significant as the upload compression (given that download speeds are generally higher than upload speeds), and in our case the download compression rate is simply given by $d/K$. For both settings, we use the CIFAR10 dataset (60,000 32$\times$32$\times$3 images of 10 classes) with a 50,000/10,000 train/test split.

\begin{figure}[htbp]
\includegraphics[width=.72\linewidth]{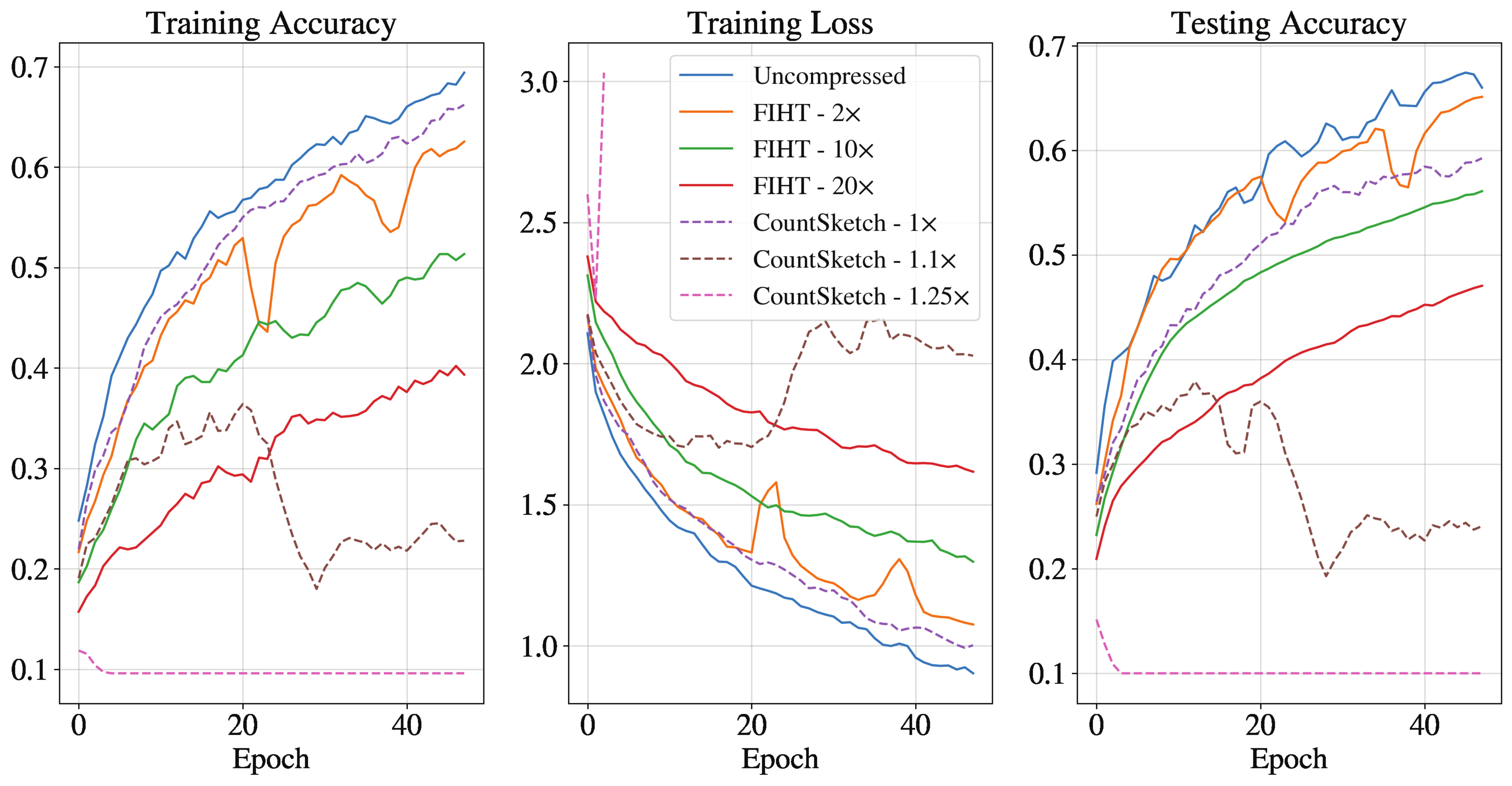}
\centering
\caption{Training Accuracy (left), Training Loss (middle), and Testing Accuracy (right) achieved on CIFAR10 with i.i.d. datasets for local workers.}
\label{fig:iid}
\end{figure}

In the first setting, we instantiate 100 workers and split the CIFAR10 training dataset such that all local datasets are i.i.d. As seen in Figure~\ref{fig:iid}, our algorithm is able to achieve 2$\times$ upload compression with marginal effect to the training and testing accuracy over 50 epochs. As the compression rate increases, the convergence of our method gradually deteriorates (echoing results in the synthetic case). For comparison, we also show the results of using Count Sketch with $5$ rows and $d/(5\lambda)$ columns, where $\lambda$ is the desired compression rate, in lieu of FIHT. In our setting, while uncompressed (1$\times$) Count Sketch performs well, it is very sensitive to higher compression, diverging for 1.1$\times$ and 1.25$\times$ compression.

\begin{figure}[htbp]
\includegraphics[width=.72\linewidth]{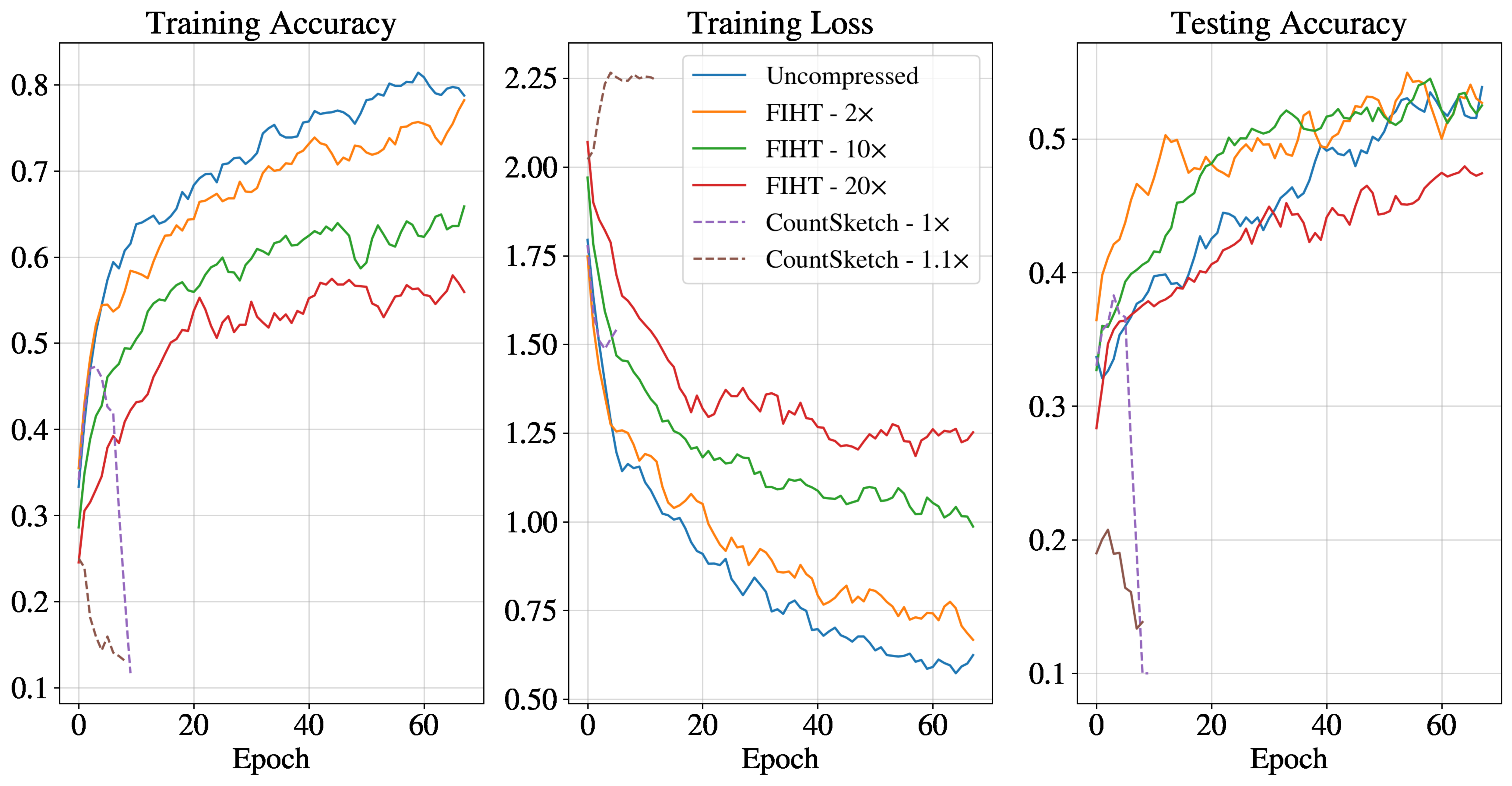}
\centering
\caption{Training Accuracy (left), Training Loss (middle), and Testing Accuracy (right) achieved on CIFAR10 with non-i.i.d. datasets for local workers.}
\label{fig:noniid}
\end{figure}

In the second setting, we split the CIFAR10 dataset into 10,000 sets of 5 images of a single class and assign these sets to 10,000 workers. Each epoch consists of 100 rounds with 1\% (100) workers participating in each round. In Figure~\ref{fig:noniid}, similar to the i.i.d. setting, we see that our algorithm's training accuracy convergence gradually deteriorates with higher compression. Typical of the non-i.i.d. setting, the testing accuracy is not as high as that of the i.i.d. setting. However, we note that FIHT is able to achieve 10$\times$ compression with negligible effect on the testing accuracy. In addition, Count Sketch with $5$ rows and $d/(5\lambda)$ columns diverges even for small compression rates in this problem setting.

\section{Conclusion}
In this paper, we develop a communication efficient SGD algorithm based on compressed sensing. This algorithm has several direct variants. For example, momentum method can be directly incorporated. Also, when the number of devices $n$ is very large, the server can choose to query compressed stochastic gradients from a randomly chosen subset of workers.

Our convergence guarantees require $\operatorname{sp}(p(t))$ to be persistently low, which is hard to check in practice. The numerical experiments also show that our algorithm can work even if $\operatorname{sp}(p(t))$ grows to a relatively high level. They suggest that our theoretical analysis can be further improved, which will be an interesting future direction.




\appendix

\section{Auxiliary Results}

In this section, we provide some auxiliary results for the proof of Theorem~\ref{theorem:main}. We first give an alternative form of the reconstruction error derived from the condition~\eqref{eq:sparsity_condition} and the performance guarantee~\eqref{eq:FIHT_performance}.

\vspace{-2pt}
\begin{lemma}\label{lemma:compressor}
Suppose the conditions in Lemma~\ref{lemma:FIHT_performance} are satisfied. Let $w\in\mathbb{R}^Q$ be arbitrary, and let $x\in\mathbb{R}^d$ satisfy that $\operatorname{sp}(x)$ is upper bounded by the right-hand side of~\eqref{eq:sparsity_condition}. Then
$$
\|\mathcal{A}(\Phi x+w;\Phi)-x\|_2
\leq \sqrt{\frac{\gamma}{2}} \|x\|_2
+C_{\mathcal{A},\mathrm{n}}\|w\|_2.
$$
\end{lemma}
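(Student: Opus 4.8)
The plan is to feed the FIHT guarantee of Lemma~\ref{lemma:FIHT_performance} through a pair of sharp tail estimates for the best-$K$ approximation. Since $\operatorname{sp}(x) = \|x\|_1^2/(\|x\|_2^2\, d)$, the hypothesis that $\operatorname{sp}(x)$ is at most the right-hand side of~\eqref{eq:sparsity_condition} is precisely
\[
\|x\|_1 \;\le\; \frac{\sqrt{2\gamma K}}{1 + C_{\mathcal{A},\mathrm{s}}(3 - 2K/d)}\,\|x\|_2 .
\]
Because $\|\mathcal{A}(\Phi x + w;\Phi) - x\|_2$ is controlled by Lemma~\ref{lemma:FIHT_performance}, it therefore suffices to prove
\[
(C_{\mathcal{A},\mathrm{s}}+1)\,\|x - x^{[K]}\|_2 \;+\; \frac{C_{\mathcal{A},\mathrm{s}}}{\sqrt K}\,\|x - x^{[K]}\|_1 \;\le\; \frac{1 + C_{\mathcal{A},\mathrm{s}}(3 - 2K/d)}{2\sqrt K}\,\|x\|_1 ,
\]
for then the left-hand side is $\le \sqrt{\gamma/2}\,\|x\|_2$ by the display above, and adding $C_{\mathcal{A},\mathrm{n}}\|w\|_2$ yields the claimed inequality. (The cases $x = 0$ or $x$ having fewer than $K$ nonzero entries are trivial, since the tail vanishes.)

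Everything then reduces to two elementary facts about the tail $T := x - x^{[K]}$. Let $\tau$ denote the $K$-th largest magnitude among the entries of $x$, so that $\|T\|_\infty \le \tau$, while the top $K$ entries each have magnitude $\ge \tau$, giving $\|x^{[K]}\|_1 \ge K\tau$, i.e. $\tau \le (\|x\|_1 - \|T\|_1)/K$. The first fact is a sharpened Stechkin-type bound: $\|T\|_2^2 \le \|T\|_\infty\|T\|_1 \le \tau\,\|T\|_1 \le (\|x\|_1 - \|T\|_1)\|T\|_1/K \le \|x\|_1^2/(4K)$ by AM--GM, hence $\|T\|_2 \le \|x\|_1/(2\sqrt K)$. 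The second fact uses that $T$ has at most $d-K$ nonzero entries: $\|T\|_1 \le (d-K)\tau \le (d-K)(\|x\|_1 - \|T\|_1)/K$, which rearranges to $\|T\|_1 \le (1 - K/d)\|x\|_1$.

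Substituting these two bounds, the left-hand side of the target inequality is at most
\[
\frac{\|x\|_1}{\sqrt K}\!\left[\frac{C_{\mathcal{A},\mathrm{s}}+1}{2} + C_{\mathcal{A},\mathrm{s}}\Big(1 - \frac{K}{d}\Big)\right]
= \frac{\|x\|_1}{\sqrt K}\cdot\frac{1 + C_{\mathcal{A},\mathrm{s}}(3 - 2K/d)}{2},
\]
which is exactly the required bound, so the proof is complete. The one delicate point — and the reason the constant $[1 + C_{\mathcal{A},\mathrm{s}}(3 - 2K/d)]^2$ enters~\eqref{eq:sparsity_condition} in precisely that shape — is that the textbook tail estimates $\|T\|_2 \le \|x\|_1/\sqrt K$ and $\|T\|_1 \le \|x\|_1$ are each too lossy: one needs the factor $\tfrac12$ in the $\ell_2$ tail together with the factor $(1 - K/d)$ in the $\ell_1$ tail (both of which are simultaneously attained by a flat vector with $d = 2K$) for the constants to cancel. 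Beyond getting these two estimates tight, I expect no obstacle.
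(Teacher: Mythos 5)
Your proposal is correct and follows essentially the same route as the paper: apply the FIHT guarantee of Lemma~\ref{lemma:FIHT_performance}, bound the tail terms via $\|x-x^{[K]}\|_2\le\|x\|_1/(2\sqrt{K})$ and $\|x-x^{[K]}\|_1\le(1-K/d)\|x\|_1$, and then substitute the bound on $\operatorname{sp}(x)$; the constant algebra $\tfrac{C_{\mathcal{A},\mathrm{s}}+1}{2}+C_{\mathcal{A},\mathrm{s}}(1-K/d)=\tfrac{1+C_{\mathcal{A},\mathrm{s}}(3-2K/d)}{2}$ matches the paper exactly. The only difference is cosmetic: you re-derive the sharpened Stechkin-type estimate from the AM--GM argument, whereas the paper simply cites it from \cite[Lemma 7]{gilbert2007one}.
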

\begin{proof}
By \cite[Lemma 7]{gilbert2007one}, we have $\|x-x^{[K]}\|_2\leq \|x\|_1/(2\sqrt{K})$. Therefore by Lemma~\ref{lemma:FIHT_performance},
\begin{align*}
\|\mathcal{A}(\Phi x+w;\Phi)-x\|_2
\leq\ &
\frac{C_{\mathcal{A},\mathrm{s}}+1}{2\sqrt{K}}\|x\|_1
+\frac{C_{\mathcal{A},\mathrm{s}}}{\sqrt{K}}\big\|x-x^{[K]}\big\|_1
+C_{\mathcal{A},\mathrm{n}}\|w\|_2 \\
\leq\ &
\left[\frac{C_{\mathcal{A},\mathrm{s}}+1}{2}
+C_{\mathcal{A},\mathrm{s}}\!\left(1-\mfrac{K}{d}\right)\right]\frac{\big\|x\big\|_1}{\sqrt{K}}
+C_{\mathcal{A},\mathrm{n}}\|w\|_2 
.
\end{align*}
Plugging in the definition and upper bound of $\operatorname{sp}(x)$ leads to the result.
\end{proof}

Next, we derive a bound on the second moment of $e(t)$.
\begin{lemma}\label{lemma:err_bound}
We have
\begin{align*}
\frac{1}{T}\sum_{t=1}^{T}
\mathbb{E}\!\left[\|e(t)\|_2^2\right]\!
\leq\,&
\frac{2\eta^2}{1\!-\!\gamma}
\bigg[
\frac{\gamma(1\!+\!\gamma)}{1\!-\!\gamma}G^2
\!+\! 2\!\left(C_{\mathcal{A},\mathrm{n}}
\!\!+\!\!\sqrt{Q/d}\right)^{\!2}\!\!\sigma^2
\bigg]
+\frac{2\eta^2\gamma(1\!+\!\gamma)}{(1\!-\!\gamma)^2}\cdot\frac{1}{T}\sum_{t=1}^T\mathbb{E}\!\left[\|\nabla f(x(t))\|_2^2\right].
\end{align*}
\end{lemma}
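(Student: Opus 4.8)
The plan is to convert the error-feedback recursion of Lemma~\ref{lemma:error_feedback} into a one-step contraction for $\mathbb{E}[\|e(t)\|_2^2]$ and then sum it up. From Lemma~\ref{lemma:error_feedback} we have $e(1)=0$ and, for $t\geq 1$,
\[ e(t\!+\!1) \;=\; \big(p(t) - \mathcal{A}(\Phi p(t)+\eta w(t);\Phi)\big) \;+\; \tfrac{\eta Q}{d}\Phi^\top w(t). \]
I would bound the two summands separately. The first is the compressed-sensing reconstruction error: since the hypothesis~\eqref{eq:sparsity_condition} is exactly the upper bound on $\operatorname{sp}(p(t))$ that Lemma~\ref{lemma:compressor} requires (applied with signal $p(t)$ and measurement noise $\eta w(t)$), it gives $\|p(t) - \mathcal{A}(\Phi p(t)+\eta w(t);\Phi)\|_2 \leq \sqrt{\gamma/2}\,\|p(t)\|_2 + C_{\mathcal{A},\mathrm{n}}\,\eta\|w(t)\|_2$. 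For the second summand I would use that, by construction, the rows of $\Phi$ are $\sqrt{d/Q}$ times orthonormal rows of $B$, so $\Phi\Phi^\top = \tfrac{d}{Q}I_Q$; hence $\tfrac{Q}{d}\Phi^\top$ is a right inverse of $\Phi$ and $\tfrac{\eta Q}{d}\|\Phi^\top w(t)\|_2 = \eta\sqrt{Q/d}\,\|w(t)\|_2$. Combining via the triangle inequality yields $\|e(t\!+\!1)\|_2 \leq \sqrt{\gamma/2}\,\|p(t)\|_2 + \eta\big(C_{\mathcal{A},\mathrm{n}}+\sqrt{Q/d}\big)\|w(t)\|_2$.

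Next I would square this, use $(a\!+\!b)^2\leq 2a^2+2b^2$ to get $\|e(t\!+\!1)\|_2^2 \leq \gamma\|p(t)\|_2^2 + 2\eta^2\big(C_{\mathcal{A},\mathrm{n}}+\sqrt{Q/d}\big)^2\|w(t)\|_2^2$, then substitute $p(t)=e(t)+\eta g(t)$ and apply Young's inequality $\|p(t)\|_2^2 \leq (1\!+\!\beta)\|e(t)\|_2^2 + (1\!+\!\beta^{-1})\eta^2\|g(t)\|_2^2$. The crucial choice is $\beta = \tfrac{1-\gamma}{2\gamma}$, for which $\gamma(1\!+\!\beta)=\tfrac{1+\gamma}{2}<1$ and $\gamma(1\!+\!\beta^{-1})=\tfrac{\gamma(1+\gamma)}{1-\gamma}$. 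Taking expectations and using Assumption~\ref{assumption:second_moment_w} ($\mathbb{E}[\|w(t)\|_2^2]\leq\sigma^2$) then gives the one-step bound
\[ \mathbb{E}[\|e(t\!+\!1)\|_2^2] \;\leq\; \tfrac{1+\gamma}{2}\,\mathbb{E}[\|e(t)\|_2^2] \;+\; \tfrac{\gamma(1+\gamma)}{1-\gamma}\,\eta^2\,\mathbb{E}[\|g(t)\|_2^2] \;+\; 2\eta^2\big(C_{\mathcal{A},\mathrm{n}}+\sqrt{Q/d}\big)^2\sigma^2. \]

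Finally I would unroll this recursion from $e(1)=0$, expressing $\mathbb{E}[\|e(t)\|_2^2]$ as a geometric convolution with ratio $\tfrac{1+\gamma}{2}$ of the two driving terms, sum over $t=1,\dots,T$, and use $\sum_{t=s+1}^{T}\big(\tfrac{1+\gamma}{2}\big)^{t-1-s}\leq \tfrac{2}{1-\gamma}$ to obtain $\tfrac1T\sum_t\mathbb{E}[\|e(t)\|_2^2] \leq \tfrac{2}{1-\gamma}\big[\tfrac{\gamma(1+\gamma)}{1-\gamma}\eta^2\cdot\tfrac1T\sum_t\mathbb{E}[\|g(t)\|_2^2] + 2\eta^2(C_{\mathcal{A},\mathrm{n}}+\sqrt{Q/d})^2\sigma^2\big]$. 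The last step is to replace $\mathbb{E}[\|g(t)\|_2^2]$ by $\mathbb{E}[\|\nabla f(x(t))\|_2^2]+G^2$, which follows from the conditional unbiasedness $\mathbb{E}[g(t)\mid x(t)]=\nabla f(x(t))$ together with Assumption~\ref{assumption:var_grad}; rearranging then gives exactly the claimed inequality.

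None of the steps is deep, so I expect the only thing needing real care to be the bookkeeping of the two Young's inequalities so that the constants $\tfrac{1+\gamma}{2}$, $\tfrac{\gamma(1+\gamma)}{1-\gamma}$ and $\tfrac{2}{1-\gamma}$ line up with the stated bound; the choice $\beta=\tfrac{1-\gamma}{2\gamma}$ is precisely what makes this happen. A secondary point I would make explicit is the identity $\Phi\Phi^\top=\tfrac{d}{Q}I_Q$, which is what keeps the additional server-side error-feedback term $\tfrac{\eta Q}{d}\Phi^\top w(t)$ under control.
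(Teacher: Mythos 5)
Your proposal is correct and follows essentially the same route as the paper's proof: the same triangle-inequality split into the Lemma~\ref{lemma:compressor} reconstruction error plus the $\tfrac{\eta Q}{d}\Phi^\top w(t)$ term (with $\|\Phi^\top w\|_2=\sqrt{d/Q}\,\|w\|_2$), the same Young's inequality with parameter $\beta=\tfrac{1-\gamma}{2\gamma}$ yielding the contraction factor $\tfrac{1+\gamma}{2}$, and the same use of Assumptions~\ref{assumption:var_grad} and~\ref{assumption:second_moment_w}. The only cosmetic difference is at the end, where you unroll the recursion as a geometric sum bounded by $\tfrac{2}{1-\gamma}$, while the paper sums the recursion over $t$ and rearranges using $e(1)=0$ and $\mathbb{E}[\|e(T\!+\!1)\|_2^2]\geq 0$; both give the identical constants.
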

\begin{proof}
By definition, we have
\begin{align*}
\mathbb{E}\!\left[\|e(t\!+\!1)\|_2^2\right]
\leq \ &
\mathbb{E}\Big[\big(\|\Delta(t)-p(t)\|_2
+\mfrac{\eta Q}{d}\big\|\Phi^\top w(t)\big\|_2\big)^2\Big] \\
=\ &
\mathbb{E}\Big[
\big(\left\|\mathcal{A}(\Phi p(t)+ \eta w(t))-p(t)\right\|_2
+\eta\sqrt{Q/d}\|w\|_2\big)^2\Big] \\
\leq\ &
\mathbb{E}\Big[
\big(
\sqrt{\gamma/2}\|p(t)\|_2
+\eta\big(C_{\mathcal{A},\mathrm{n}}\!+\!\sqrt{Q/d}\big)\|w\|_2
\big)^2\Big] \\
\leq\ &
\gamma
\mathbb{E}\!\left[\|p(t)\|_2^2\right] + 2\eta^2\big(C_{\mathcal{A},\mathrm{n}}\!+\!\sqrt{Q/d}\big)^2
\mathbb{E}\!\left[\| w(t)\|_2^2\right] \\
\leq\ &
\gamma
\mathbb{E}\!\left[\| \eta g(t)+e(t)\|_2^2\right] + 2\eta^2 \big(C_{\mathcal{A},\mathrm{n}}\!+\!\sqrt{Q/d}\big)^2 \sigma^2,
\end{align*}
where the first inequality follows from Lemma~\ref{lemma:compressor}, and the second inequality follows from the definition of $p(t)$ and the assumption that $\mathbb{E}[\|w\|_2^2]\leq\sigma^2$. Notice that
\begin{align*}
\mathbb{E}\!\left[\|\eta g(t)+e(t)\|_2^2\right]
\leq
\left(1+\frac{2\gamma}{1-\gamma}\right)
\mathbb{E}\!\left[\|\eta g(t)\|_2^2\right]
+\left(1+\frac{1-\gamma}{2\gamma}\right)
\mathbb{E}\!\left[\|e(t)\|_2^2\right] 
,
\end{align*}
which leads to
\begin{align*}
\mathbb{E}\!\left[\|e(t\!+\!1)\|_2^2\right]
\leq
\frac{1 \!+\! \gamma}{2}\mathbb{E}\!\left[\|e(t)\|_2^2\right]
+\eta^2\frac{\gamma(1 \!+\! \gamma)}{1 \!-\! \gamma}
\mathbb{E}\!\left[\|g(t)\|_2^2\right]
+2\eta^2\!\big(C_{\mathcal{A},\mathrm{n}}
\!+\!\sqrt{Q/d}\big)^{\!2}\! \sigma^2.
\end{align*}
By $\mathbb{E}[g(t)|x(t)]=\nabla f(x(t))$ and Assumption~\ref{assumption:var_grad}, we have
\begin{align*}
\mathbb{E}\!\left[
\|g(t)\|_2^2
\right]
=\ & \mathbb{E}\!\left[\|\nabla f(x(t))\|_2^2\right]
+\mathbb{E}\!\left[\|g(t)-\nabla f(x(t))\|_2^2\right] \\
\leq\ & \mathbb{E}\!\left[\|\nabla f(x(t))\|_2^2\right]
+ G^2.
\end{align*}
Therefore
\begin{align*}
\mathbb{E}\!\left[\|e(t\!+\!1)\|_2^2\right]
\leq
\frac{1 \!+\! \gamma}{2}
\mathbb{E}\mkern-4mu\left[\|e(t)\|_2^2\right]
\!+\! \eta^2\mkern-1mu
\frac{\gamma(1 \!+\! \gamma)}{1 \!-\! \gamma}\mathbb{E}\mkern-4mu\left[\|\nabla \!f(x(t)\mkern-2mu)\|_2^2\right]
+
\eta^2
\!\left[
\frac{\gamma(1 \!+\! \gamma)}{1 \!-\! \gamma}G^2
\!+\! 2\big(C_{\mathcal{A},\mathrm{n}}
\!\!+\!\!\sqrt{Q/d}\big)^{\!2}\!\sigma^2
\right].
\end{align*}
By summing over $t=1,\ldots,T$ and noting that $e(1)=0$ and $\mathbb{E}[\|e(T+1)\|_2^2]\geq 0$, we get
\begin{align*}
\frac{1}{T}\sum_{t=1}^{T}
\mathbb{E}\!\left[\|e(t)\|_2^2\right]
\leq\ &
\frac{1\!+\!\gamma}{2T}
\sum_{t=1}^{T}
\mathbb{E}\!\left[\|e(t)\|_2^2\right]
+
\frac{\eta^2\gamma(1 \!+\!\gamma)}{(1\!-\!\gamma)T}
\sum_{t=1}^T
\mathbb{E}\!\left[\|\nabla f(x(t)\mkern-2mu)\|_2^2\right] \\
& +\eta^2
\!\left[
\frac{\gamma(1 \!+\! \gamma)}{1 \!-\!\gamma}G^2
+ 2\big(C_{\mathcal{A},\mathrm{n}}
\!\!+\!\!\sqrt{Q/d}\big)^{\!2}\!\sigma^2
\right],
\end{align*}
which then leads to the desired result.
\end{proof}

\section{Proof of Theorem~\ref{theorem:main}}\label{appendix:proof}

\noindent\textbf{Convex case:} Denote $\tilde{x}(t)=x(t)-e(t)$, and it can be checked that
$$
\begin{aligned}
\tilde{x}(t\!+\!1)
& =x(t\!+\!1) - e(t\!+\!1)=x(t) - \Delta(t)
-(p(t)-\Delta(t)) = x(t)-p(t) \\
& = \tilde{x}(t) - \eta g(t).
\end{aligned}
$$
We then have
$$
\|\tilde{x}(t\!+\!1)-x^\ast\|_2^2
=\|\tilde{x}(t)-x^\ast\|_2^2
+\eta^2\|g(t)\|_2^2
-2\eta\langle g(t),\tilde{x}(t)-x^\ast\rangle.
$$
By taking the expectation and noting $\mathbb{E}[g(t)|x(t)]=\nabla f(x(t))$ and Assumption~\ref{assumption:var_grad}, we get
$$
\begin{aligned}
\mathbb{E}\!\left[\|\tilde{x}(t\!+\!1)-x^\ast\|_2^2\right]
\leq\ &
\mathbb{E}\!\left[\|\tilde{x}(t)-x^\ast\|_2^2\right]
+\eta^2\!\left(
\mathbb{E}\!\left[\|\nabla f(x(t))\|_2^2\right]
+G^2\right) \\
&
-2\eta\,\mathbb{E}\!\left[
\langle \nabla f(x(t)),x(t)-x^\ast\rangle\right]
+2\eta\,\mathbb{E}\!\left[
\langle \nabla f(x(t)),e(t)\rangle\right],
\end{aligned}
$$
which leads to
$$
\begin{aligned}
\mathbb{E}\!\left[
\langle \nabla f(x(t)),x(t)-x^\ast\rangle\right]
\leq\ &
\frac{1}{2\eta}(\mathbb{E}\!\left[\|\tilde{x}(t)-x^\ast\|_2^2\right]
-\mathbb{E}\!\left[\|\tilde{x}(t\!+\!1)-x^\ast\|_2^2\right])
+\frac{\eta G^2}{2} \\
&
+\frac{\eta}{2}\,\mathbb{E}\!\left[\|\nabla f(x(t))\|_2^2\right]
+\mathbb{E}\!\left[
\langle \nabla f(x(t)),e(t)\rangle\right] \\
\leq\ &
\frac{1}{2\eta}(\mathbb{E}\!\left[\|\tilde{x}(t)-x^\ast\|_2^2\right]
-\mathbb{E}\!\left[\|\tilde{x}(t\!+\!1)-x^\ast\|_2^2\right])
+\frac{\eta G^2}{2} \\
&
+\frac{\eta+(3L)^{-1}}{2}\,\mathbb{E}\!\left[\|\nabla f(x(t))\|_2^2\right]
+\frac{3L}{2}\mathbb{E}\!\left[ \|e(t)\|_2^2\right],
\end{aligned}
$$
where in the second inequality we used $\langle \nabla f(x(t)),e(t)\rangle
\leq \frac{1}{6L}\|\nabla f(x(t))\|_2^2
+\frac{3L}{2}\|e(t)\|_2^2$. Now, we take the average of both sides over $t=1,\ldots,T$ and plug in the bound in Lemma~\ref{lemma:err_bound} to get
\begin{align*}
\frac{1}{T}\sum_{t=1}^T
\mathbb{E}\!\left[
\langle \nabla f(x(t)),x(t)-x^\ast\rangle\right]
\leq\ &
\frac{1}{2\eta T}\|x(1)-x^\ast\|_2^2
+\frac{\eta G^2}{2}
+\frac{3\eta^2 L}{1\!-\!\gamma}\!\left[
\frac{\gamma(1\!+\!\gamma)}{1\!-\!\gamma}G^2
+ 2\big(C_{\mathcal{A},\mathrm{n}}
\!+\!\sqrt{Q/d}\big)^{\!2}\sigma^2
\right] \\
&
+
\left(\frac{\eta\!+\!(3L)^{-1}}{2}
\!+\!\frac{3\eta^2 L\gamma(1\!+\!\gamma)}{(1\!-\!\gamma)^2}\right)
\frac{1}{T}\sum_{t=1}^T\mathbb{E}\!\left[\|\nabla f(x(t))\|_2^2\right]\!.
\end{align*}
By $\eta=1/(L\sqrt{T})$, we can show that for sufficiently large $T$,
$$
\frac{\eta+(3L)^{-1}}{2}
+\frac{3\eta^2 L\gamma(1+\gamma)}{(1-\gamma)^2}
= \frac{1}{L}
\left(\frac{1}{6}
+\frac{1}{2\sqrt{T}}
+\frac{3\gamma(1+\gamma)}{T(1-\gamma)^2}\right)
\leq \frac{1}{4L}.
$$
Thus
$$
\begin{aligned}
\frac{1}{T}\sum_{t=1}^T
\mathbb{E}\!\left[
\langle \nabla f(x(t)),x(t)-x^\ast\rangle\right]
\leq\ &
\frac{1}{2\eta T}\|x(1)-x^\ast\|_2^2
+\frac{\eta G^2}{2}
+\frac{3\eta^2 L}{1\!-\!\gamma}\!\left[
\frac{\gamma(1\!+\!\gamma)}{1\!-\!\gamma}G^2
+ 2\big(C_{\mathcal{A},\mathrm{n}}
\!+\!\sqrt{Q/d}\big)^{\!2}\sigma^2
\right] \\
&
+
\frac{1}{4L}\cdot
\frac{1}{T}\sum_{t=1}^T\mathbb{E}\!\left[\|\nabla f(x(t))\|_2^2\right].
\end{aligned}
$$
By the convexity of $f$, we have
$f(x(t))-f(x^\ast)\leq \langle \nabla f(x(t)),x(t)-x^\ast\rangle$. Furthermore, since $f$ is $L$-smooth, we see that
$$
f(x^\ast)
\leq f\!\left(x(t)-\frac{1}{L}\nabla f(x(t))\right)
\leq f(x(t))
-\frac{1}{2L}\|\nabla f(x(t))\|_2^2,
$$
which leads to $\|\nabla f(x(t))\|_2^2
\leq 2L(f(x(t))-f(x^\ast))$. We then get
$$
\begin{aligned}
\frac{1}{T}\sum_{t=1}^T
\mathbb{E}\!\left[f(x(t))-f(x^\ast)\right]
\leq\ &
\frac{1}{2T}\sum_{t=1}^T\mathbb{E}\!\left[f(x(t))-f(x^\ast)\right]
+\frac{1}{2\eta T}\|x(1)-x^\ast\|_2^2
+\frac{\eta G^2}{2} \\
& +\frac{3\eta^2 L}{1\!-\!\gamma}\!\left[
\frac{\gamma(1\!+\!\gamma)}{1\!-\!\gamma}G^2
+ 2\big(C_{\mathcal{A},\mathrm{n}}
\!+\!\sqrt{Q/d}\big)^{\!2}\sigma^2
\right].
\end{aligned}
$$
By subtracting $\frac{1}{2T}
\sum_{t=1}^T\mathbb{E}\!\left[f(x(t)) \!-\! f(x^\ast)\right]$ from both sides of the inequality, and using the bound $f(\bar{x}(t))
\leq \frac{1}{T}\sum_{t=1}^T f(x(t))$ that follows from the convexity of $f$, we get the final bound.

\vspace{5pt}
\noindent\textbf{Nonconvex case:} Denote $\tilde{x}(t) = x(t) - e(t)$, and it can be checked that
$
\tilde{x}(t\!+\!1)
= \tilde{x}(t) - \eta g(t)
$.
Since $f$ is $L$-smooth, we get
$$
f(\tilde{x}(t\!+\!1))
\leq f(\tilde{x}(t))
-
\eta\langle\nabla f(\tilde{x}(t)),
g(t)\rangle
+\frac{\eta^2 L}{2}
\|g(t)\|_2^2.
$$
By taking the expectation and using $\mathbb{E}[g(t)|x(t)]
=\nabla f(x(t))$ and Assumption~\ref{assumption:var_grad}, we see that
$$
\begin{aligned}
\mathbb{E}\!\left[f(\tilde{x}(t\!+\!1))\right]
\leq\ &
\mathbb{E}\!\left[f(\tilde{x}(t))\right]
-\eta\,
\mathbb{E}\!\left[\langle\nabla f(\tilde{x}(t)),\nabla f(x(t))\rangle\right]
+\frac{\eta^2 L}{2}
\!\left(\mathbb{E}\!\left[\|\nabla f(x(t))\|_2^2\right]+G^2\right) \\
=\ &
\mathbb{E}\!\left[f(\tilde{x}(t))\right]
-\eta\,
\mathbb{E}\!\left[\|\nabla f(x(t))\|_2^2\right] \\
& -\eta\,
\mathbb{E}\!\left[\langle\nabla f(\tilde{x}(t))-\nabla f(x(t)),\nabla f(x(t))\rangle\right]
+\frac{\eta^2 L}{2}\!\left(\mathbb{E}\!\left[\|\nabla f(x(t))\|_2^2\right]+G^2\right) \\
\leq\ &
\mathbb{E}\!\left[f(\tilde{x}(t))\right]
-\frac{\eta(1-\eta L)}{2}
\mathbb{E}\!\left[\|\nabla f(x(t))\|_2^2\right] \\
&
+\frac{\eta}{2}\,
\mathbb{E}\!\left[
\|\nabla f(\tilde{x}(t))-\nabla f(x(t))\|_2^2\right]
+\frac{\eta^2 L}{2} G^2 \\
\leq\ &
\mathbb{E}\!\left[f(\tilde{x}(t))\right]
-\frac{\eta(1-\eta L)}{2}
\mathbb{E}\!\left[\|\nabla f(x(t))\|_2^2\right]
+\frac{\eta L^2}{2}
\mathbb{E}\!\left[
\|e(t)\|_2^2\right]
+\frac{\eta^2 L}{2} G^2,
\end{aligned}
$$
where in the second inequality we used $\langle\nabla f(\tilde{x}(t))-\nabla f(x(t)),\nabla f(x(t))\rangle\leq \frac{1}{2}\|\nabla f(\tilde{x}(t))-\nabla f(x(t))\|_2^2+\frac{1}{2}\|\nabla f(x(t))\|_2^2$, and in the last inequality we used the $L$-smoothness of $f$.
By taking the telescoping sum, we get
$$
\begin{aligned}
\frac{1-\eta L}{2}
\frac{1}{T}\sum_{t=1}^T\mathbb{E}\left[\|\nabla f(x(t))\|_2^2\right]
\leq\ &
\frac{f(x(1))-f^\ast}{\eta}
+\frac{\eta LG^2}{2}
+\frac{L^2}{2}\cdot\frac{1}{T}\sum_{t=1}^T
\mathbb{E}\!\left[\|e(t)\|_2^2\right].
\end{aligned}
$$
After plugging in the bound in Lemma~\ref{lemma:err_bound}, we get
$$
\begin{aligned}
\frac{1-\eta L}{2}
\frac{1}{T}\sum_{t=1}^T\mathbb{E}\left[\|\nabla f(x(t))\|_2^2\right]
\leq\ &
\frac{f(x(1))-f^\ast}{\eta}
+\frac{\eta LG^2}{2}
+\frac{\eta^2 L^2}{1\!-\!\gamma}\!\left[
\frac{\gamma(1\!+\!\gamma)}{1\!-\!\gamma}G^2
+ 2\!\left(C_{\mathcal{A},\mathrm{n}}
\!+\!\sqrt{Q/d}\right)^{\!2}\!\sigma^2
\right] \\
&
+\eta^2 L^2 \frac{\gamma(1+\gamma)}{(1-\gamma)^2}
\cdot\frac{1}{T}\sum_{t=1}^T
\mathbb{E}\!\left[\|\nabla f(x(t))\|_2^2\right].
\end{aligned}
$$
By choosing $\eta = 1/(L\sqrt{T})$ and letting $T$ be sufficiently large, we have
$$
\frac{1-\eta L}{2}
-\eta^2 L^2\frac{\gamma(1+\gamma)}{(1-\gamma)^2}
=\frac{1}{2}-\frac{1}{2\sqrt{T}}
-\frac{\gamma(1+\gamma)}{T(1-\gamma)^2}
\geq \frac{1}{3}.
$$
Finally, we get
$$
\frac{1}{3T}\sum_{t=1}^T
\mathbb{E}\!\left[\|\nabla f(x(t))\|_2^2\right]
\leq
\frac{2L\!\left(f(x(1)) \!-\! f^\ast\right) + G^2}{2\sqrt{T}}
+\frac{1}{T}\!\left[
\frac{\gamma(1+\gamma)}{(1-\gamma)^2}G^2
+\frac{ 2\big(C_{\mathcal{A},\mathrm{n}}
\!+\!\sqrt{Q/d}\big)^{\!2}\sigma^2}{1-\gamma}
\right]
\!,
$$
which completes the proof.

\section{Further Details on Numerical Experiments}\label{appendix:numerical}

For all numerical experiments in this paper, we employ the WHT matrix as the ``base matrix'' for generating the sensing matrix $\Phi$. We choose WHT instead of DCT here because the library \texttt{Fast Fast Hadamard Transform (FFHT)}~\cite{andoni2015practical} provides a heavily optimized C99 implementation of fast WHT, which turns out to be faster than the fast DCT in the Python library \texttt{SciPy} on our computing platforms.

Note that the WHT matrix is only defined for $d$ that is a power of $2$. In order to use WHT for general $d$, we let $d_{\mathrm{aug}}=2^{\lceil\log_2 d\rceil}$ and generate $\Phi\in\mathbb{R}^{Q\times d_{\mathrm{aug}}}$ by randomly choosing Q rows from the $d_{\mathrm{aug}}\times d_{\mathrm{aug}}$ WHT matrix as in Proposition~\ref{prop:subsampled_Fourier}. Then for any $u\in\mathbb{R}^d$, we can first pad zeros to the vector $u$ to augment its dimension to be $d_{\mathrm{aug}}$, and then multiply $\Phi\in\mathbb{R}^{Q\times d_{\mathrm{aug}}}$ with the augmented vector to form the compressed vector of dimension $Q$. For the reconstruction, after obtaining the recovered signal, we can truncate the last $d_{\mathrm{aug}}\!-\!d$ entries to get a vector of the original dimension $d$.

Code for the federated learning test case is available at \url{https://github.com/vikr53/cosamp}. 

\subsection{Test Case with Synthetic Data}

All experiments on the synthetic test case were run on a MacBook Pro Model A1502 with 2.9 GHz Dual-Core Intel Core i5 CPU and 16 GB memory. GPU acceleration was not exploited. We implement count sketch by our own code for the synthetic test case.

Recall that the local objective functions are given by $f_i(x)=\frac{1}{2}(x-x_0)^\top A_i(x-x_0)$, where each $A_i$ is a diagonal matrix. The $j$'th diagonal entries of $A_1,\ldots,A_n$ are randomly sampled such that $\big((A_1)_{jj},(A_2)_{jj},\ldots,(A_n)_{jj}\big)$ satisfies the Gaussian distribution $\mathcal{N}((e^{-j/300}+0.001)\mathbf{1},I-\frac{1}{n}\mathbf{1}\mathbf{1}^\top)$, where $I$ denotes the identity matrix and $\mathbf{1}$ denotes the vector of all ones. As a result, $A\coloneqq \frac{1}{n}\sum_i A_i$ has diagonal entries given by $A_{jj}=e^{-j/300}+0.001$. The point $x_0\in\mathbb{R}^d$ is randomly generated from the standard Gaussian distribution. We fix each $A_i$ and $x_0$ once they have been generated.

The stochastic gradient $\mathsf{g}_i(x)$ will be given by
$$
\mathbf{g}_i(x)=A_i(x-x_0)+
R_1\cdot A\mathsf{u}_1+R_2\cdot (\mathsf{b}\odot \mathsf{u}_2)
$$
Here $\mathsf{u}_1$ and $\mathsf{u}_2$ are i.i.d. random vectors following the standard Gaussian distribution $\mathcal{N}(0,I)$; the entries of $\mathsf{b}\in\mathbb{R}^d$ are i.i.d. satisfying the Bernoulli distribution $\mathrm{Ber}(p_{\mathsf{b}})$ with $p_{\mathsf{b}}\in(0,1)$; $\odot$ denotes the Hadamard product. We set $R_1=12.5, R_2=50, p_{\mathsf{b}}=1.5\times 10^{-3}$ in our test case.

\subsection{Test Case of Federated Learning with CIFAR-10 Dataset}

We ran all federated learning experiments using the research computing service provided by the authors' institution, and we did not exploit GPU acceleration. We primarily use \texttt{Tensorflow}~\cite{tensorflow2015-whitepaper} as the deep learning framework. We use \texttt{MPI for Python}~\cite{DALCIN2008655} for parallel computing on multiple processors, and communication between nodes is facilitated by \texttt{Open MPI}~\cite{gabriel04:_open_mpi}. Additionally, we use the \texttt{CSVec} package (available at \url{https://github.com/nikitaivkin/csh}) for count sketch.

Regarding the compressed sensing algorithms, we set $K=30000$ (number of nonzero entries in the reconstructed signal) for both FIHT and count sketch. The dimension $Q$ of the compressed gradients for FIHT is simply $Q=d/\lambda$ where $\lambda$ is the desired compression rate. The sketch size for count sketch is $r$~rows $\times$  $\mfrac{d}{r\lambda}$~columns; we tried $r\in\{1,5,10,50\}$ and found the best results with $r=5$ for all tested compression rates.

Recall that we conduct experiments on the i.i.d. setting and the non-i.i.d. setting. The differences between the i.i.d. setting and the non-i.i.d. settings include the following:
\begin{enumerate}[leftmargin=15pt]
\item In the i.i.d. setting, there are $100$ local workers, and we split the training dataset evenly into $100$ local datasets, so that each local dataset has exactly $50$ samples for each of the $10$ classes. In the non-i.i.d. setting, there are $10000$ workers, and each of the $10000$ local datasets has $5$ samples belonging to the same class.
\item In the i.i.d. setting, the server queries compressed local gradients from all workers for each iteration. In the non-i.i.d. setting, the server queries compressed local gradients from $1\%$ of all workers for each iteration.
\item In the i.i.d. setting, we set the local batch size to be $8$. In the non-i.i.d. setting, we set the local batch size to be $1$.
\item In the non-i.i.d. setting, we perform data augmentation (while dividing the dataset amongst the 10,000 workers) using random flips and random rotations.
\end{enumerate}
The step size $\eta$ of both the i.i.d. and the non-i.i.d. settings are set to be $0.01$. We also use the same parameters of FIHT and count sketch for both the i.i.d. and the non-i.i.d. settings.

\paragraph{Experiments on the reconstruction error:} We also conducted numerical experiments that compare the reconstruction errors of FIHT and count sketch. Here we let $g\in\mathbb{R}^d$ with $d=668426$ be randomly generated by
$
g = g_{\mathrm{sp}}+ g_{\mathrm{n}}
$,
where $g_{\mathrm{sp}}$ is a vector having $30000$ nonzero entries that are i.i.d. sampled from the standard Gaussian distribution, and $g_{\mathrm{n}}\sim\mathcal{N}(0,0.05^2 I)$. We apply FIHT and count sketch to the compression and reconstruction of $g$, and test the relative reconstruction error incurred at different compression rates $\lambda$. The relative reconstruction error is defined as $\|g-\hat{g}\|_2^2/\|g\|_2^2$ where $\hat{g}$ is the reconstructed signal by FIHT or count sketch. For a given compression rate $\lambda$, we set $Q=d/\lambda$ for FIHT, and set the sketch size to be $5$~rows $\times$ $\mfrac{d}{5\lambda}$~columns for count sketch. We set $\|\hat{g}\|_0=K=30000$ for both FIHT and count sketch. For each compression rate $\lambda$, we test FIHT and count sketch over $20$ random instances of $g$; we fix the sensing matrix $\Phi$ of FIHT and the sketching operator of count sketch for the $20$ random trials. Figure~\ref{fig:recons} empirically confirms that the reconstruction performance of count sketch is worse than FIHT, especially at higher compression rates.

\begin{figure}
\includegraphics[width=.4\linewidth]{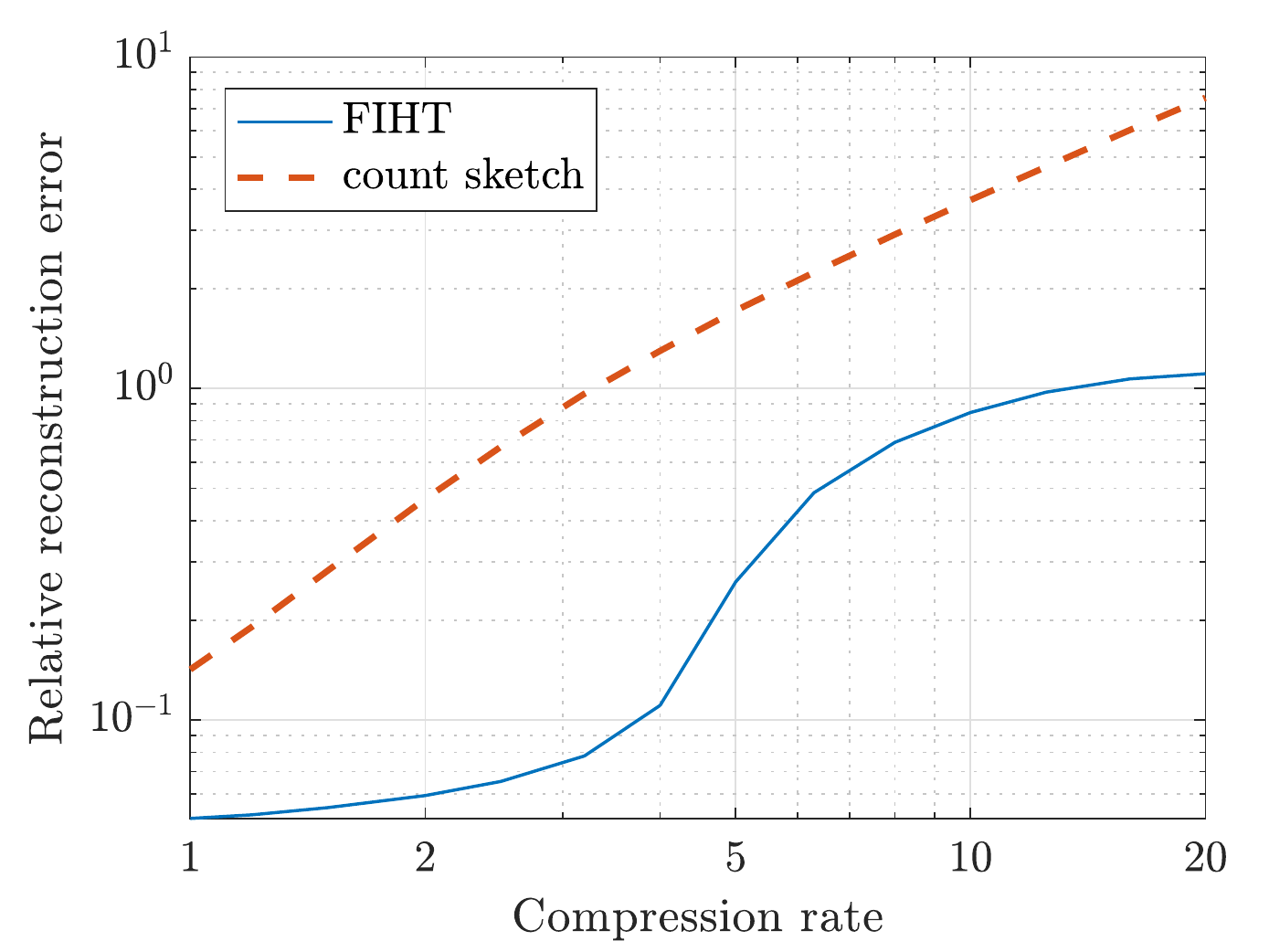}
\centering
\caption{Relative reconstruction error for FIHT and count sketch at different compression rates, averaged over $20$ random trials.}
\label{fig:recons}
\end{figure}

\section{For-each Scheme and For-all Scheme}\label{appendix:two_schemes}

In what follows, we elaborate on the two schemes in compressed sensing: the \emph{for-each} scheme and the \emph{for-all} scheme.
\paragraph{{For-each} scheme:} In this scheme, the sensing matrix $\Phi$ is chosen at random from some distribution for each individual signal $x$. Specifically, consider an algorithm belonging to the for-each scheme,  and let $\mathcal{A}(y;\Phi)$ denote the signal reconstructed from the measurement $y$ and the sensing matrix $\Phi$ by this algorithm. Then associated with the algorithm $\mathcal{A}$ is an indexed set $\{\mathcal{D}_{d,Q}\}$ with each $\mathcal{D}_{d,Q}$ being a probability distribution over $\mathbb{R}^{Q\times d}$, and theoretical guarantee of the algorithm $\mathcal{A}$ can be typically recast as follows:

\vspace{3pt}
{\it Let $K$ be sufficiently small [say, $K\leq O(Q/\log d)$]. Then there exist $\varsigma>0$ and $c\geq 0$ depending on $K$, $Q$ and $d$, such that
\begin{equation}\label{eq:app_foreach}
\mathbb{P}_{\Phi\sim\mathcal{D}_{d,Q}}
\!\left(
\left\|\mathcal{A}(\Phi x;\Phi)-x\right\|_2 \leq (1+\varsigma)\big\|x-x^{[K]}\big\|_2
\right)
\geq 1- O(d^{-c}),
\end{equation}
where $x\in\mathbb{R}^d$ is any arbitrary deterministic vector.
}
\vspace{3pt}

Note that the bound~\eqref{eq:app_foreach} applies to the reconstruction of one deterministic signal $x\in\mathbb{R}^d$, and the probability on the left-hand side of~\eqref{eq:app_foreach} is with respect to the distribution $\mathcal{D}_{d,Q}$ over the sensing matrix $\Phi$. Consequently, every time a new vector $x$ is to be measured and reconstructed under the for-each scheme, we typically generates a new sensing matrix $\Phi$ independent of $x$. Especially, if a randomly generated matrix $\Phi$ has already been used for the reconstruction of $x$, and $y$ is a vector that is dependent of $\mathcal{A}(\Phi x,\Phi)$, then theoretically the bound~\eqref{eq:app_foreach} can no longer be directly applied to the reconstruction of $y$ by $\mathcal{A}(\Phi y;\Phi)$, and as a result,  bounding $\big\|\mathcal{A}(\Phi y,\Phi)-y\big\|_2$ would be more challenging.

Examples of the for-each scheme include count sketch~\cite{charikar2002finding} and count-min sketch~\cite{cormode2005improved}.

\paragraph{{For-all} scheme:} In this scheme, one generates a single sensing matrix $\Phi\in\mathbb{R}^{Q\times d}$ that is used for the reconstruction of all possible signals $x\in\mathbb{R}^d$. As already mentioned, the \emph{restricted isometry property} (see Definition~\ref{def:RIP}) has been proposed as a condition on $\Phi$ under the for-all scheme. Intuitively, the restricted isometry property ensures that the linear measurements $y=\Phi x$ can discriminate sparse signals: Suppose $\Phi\in\mathbb{R}^{Q\times d}$ satisfies $(2K,\delta)$-RIP for some $K$ and $\delta\in(0,1)$. Then for any $x_1,x_2\in\mathbb{R}^d$ with $\|x_1\|_0\leq K$ and $\|x_2\|_0\leq K$, we have
$$
\|\Phi(x_1-x_2)\|_2
\geq \sqrt{1-\delta}\cdot \|x_1-x_2\|_2,
$$
indicating that the map $x\mapsto \Phi x$ is an injection from $\{x\in\mathbb{R}^d:\|x\|_0\leq K\}$ to $\mathbb{R}^d$. Consequently, given the measurement vector $y=\Phi x$, the solution to~\eqref{eq:comp_sens_form1} will always be unique and equal to $x$ as long as $\|x\|_0\leq K$. This explains why RIP can provide reconstruction guarantees under the for-all scheme in the ideal scenario (i.e., original vector $x$ is strictly sparse, measurement is noiseless, and the solution to the nonconvex problem~\eqref{eq:comp_sens_form1} can be obtained).

Extending the above discussion to more general scenarios is not trivial, but the existing literature has investigated reconstruction guarantees of various compressed sensing algorithms when the original vector $x$ is approximately sparse or linear measurement by $\Phi$ is noisy. One typical form of guarantees for compressed sensing algorithms under the for-all scheme is as follows:

\vspace{3pt}
{\it Suppose $\Phi\in\mathbb{R}^{Q\times d}$ satisfies $(cK,\delta)$-RIP for some $c>1$ and sufficiently small $\delta>0$. Then there exist $C_1>0$ and $C_2>0$ depending on $\delta$, such that
\begin{equation*}
\left\|\mathcal{A}(\Phi x+w;\Phi)-x\right\|_2 \leq \frac{C_1}{\sqrt{K}}\big\|x-x^{[K]}\big\|_1
+C_2\|w\|_2
\end{equation*}
for all $x\in\mathbb{R}^d$, where $\mathcal{A}(\Phi x+w;\Phi)$ denotes the reconstructed signal from the (noisy) measurement vector $\Phi x+w$ and the sensing matrix $\Phi$.
}
\vspace{3pt}

Examples of the for-all scheme include $\ell_1$-minimization~\cite{candes2005decoding} and various greedy algorithms~\cite{needell2009cosamp,wei2014fast,blanchard2015cgiht}; see also~\cite{foucart2012sparse}. We also mention that there are variants of RIP that have been used for compressed sensing with sparse sensing matrices~\cite{berinde2008combining,gilbert2010sparse}.

Finally, we note that in practice, sensing matrices that satisfy RIP are usually generated by randomized methods. For example, it has been shown that a $Q\times d$ matrix having i.i.d. standard Gaussian entries will satisfy $(K,\delta)$-RIP for $Q\geq O(K\log(d/K))$ with high probability~\cite{baraniuk2008simple}. In this work, we generate the sensing matrix $\Phi$ based on Proposition~\ref{prop:subsampled_Fourier} so that $\Phi$ has low storage and transmission cost.

\section{The Fast Iterative Hard Thresholding (FIHT) Algorithm}\label{appendix:FIHT}

We introduce some additional notations before presenting FIHT. For any $x\in\mathbb{R}^d$, we let $\mathsf{Supp}(x)$ denote the set of indices of nonzero entries of $x$, and let $\mathsf{PrincipalSupp}(x;K)$ denote the set of indices corresponding to the top-$K$ entries of $x$ in magnitude. Given $x\in\mathbb{R}^d$ and an index set $S\subseteq\{1,\ldots,d\}$, we use $\mathsf{Proj}(x,S)$ to denote the $d$-dimensional vector that keeps the entries of $x$ with indices in $S$ and sets other entries to be zero.

The Fast Iterative Hard Thresholding (FIHT) algorithm~\cite{wei2014fast} is outlined as follows:

\begin{algorithm}[H]
\renewcommand{\thealgocf}{}
\caption{Fast Iterative Hard Thresholding (FIHT)}
\DontPrintSemicolon
\KwIn{measurement $y$, sensing matrix $\Phi$, number of nonzero entries in the output $K$}

Initialize: $g(0)=0\in\mathbb{R}^d$, $w(0)=A^\top y$, $\Omega(0) = \mathsf{PrincipalSupp}(w(0),K)$, $g(1) = \mathsf{Proj}(w(0), \Omega(0))$, $s=1$\;

\Repeat{stopping criterion satisfied}{
\eIf{$s=1$}{
$\tau(s)=0$\;
}{
$\tau(s)=\mfrac{\langle y-\Phi g(s),\Phi(g(s)-g(s\!-\!1))\rangle}{\|\Phi(g(s)-g(s\!-\!1))\|^2}$\;
}
$w(s)= g(s)+\tau(s)(g(s)-g(s-1))$\;
$r^w(s) = \Phi^\top(y-\Phi w(s))$\;
$\Gamma(s) = \mathsf{Supp}(w(s))$\;
$\tilde{\alpha}(s) = \mfrac{\left\|\mathsf{Proj}(r^w(s), \Gamma(s))\right\|^2}{ \left\|\Phi\cdot \mathsf{Proj}(r^w(s), \Gamma(s))\right\|^2}$\;
$h(s) = w(s)+\tilde{\alpha}(s)r^w(s)$\;
$\Omega(s) = \mathsf{PrincipalSupp}(h(s),K)$\;
$\tilde{g}(s) = \mathsf{Proj}(h(s),\Omega(s))$\;
$r(s) = \Phi^\top(y-\Phi\tilde{g}(s))$\;
$\alpha(s) = \mfrac{\left\|\mathsf{Proj}(r(s),\Omega(s))\right\|^2}{\left\|\Phi\cdot\mathsf{Proj}(r(s),\Omega(s))\right\|^2}$\;
$g(s\!+\!1)=\tilde{g}(s)+\alpha(s)\cdot\mathsf{Proj}(r(s),\Omega(s))$\;
$s\leftarrow s+1$\;
}
\KwOut{$g(s)$}
\end{algorithm}

We elaborate further details on our implementation of FIHT.

\vspace{5pt}
\noindent\textbf{Stopping criterion.} In our implementation of FIHT, we stop the iterations whenever any of the following holds:
\begin{itemize}[leftmargin=12pt,itemsep=2pt,topsep=1pt]
\item $s\geq 26$ (i.e., the maximum number of iterations is $25$)
\item $\|w(s)\|_2\leq 10^{-4}$
\item $\operatorname{std}(\|w(s')\|_2:s\!-\!3\leq s'\leq s)\leq 0.01\cdot \operatorname{mean}(\|w(s')\|_2:s\!-\!3\leq s'\leq s)$

Here the tuple $(\|w(s')\|_2:s\!-\!3\leq s'\leq s)$ records the values of $\|w(s)\|_2$ in the last $4$ iterations, and $\operatorname{std}$ and $\operatorname{mean}$ denote the standard deviation and the mean value respectively.
\end{itemize}

\vspace{5pt}
\noindent\textbf{Multiplication of $\Phi$ with a vector.} We generate the sensing matrix $\Phi$ from an orthogonal matrix $B$ as in Proposition~\ref{prop:subsampled_Fourier}, and the ``base matrix'' $B$ is chosen to be either the DCT matrix
$$
B_{ij} = \sqrt{\frac{2}{d}}
\cdot
\frac{1}{
\sqrt{1+\delta_{1,i}}}
\cos\frac{\pi(i\!-\!1)(2j\!-\!1)}{2d},
\quad
1\leq i,j\leq d,
$$
or the WHT matrix $B=H^{(\log_2 d)}$ defined recursively by
$$
H^{(0)} = 1,
\qquad
H^{(k)}
=\frac{1}{\sqrt{2}}
\begin{bmatrix}
H^{(k-1)} & H^{(k-1)} \\
H^{(k-1)} & -H^{(k-1)}
\end{bmatrix},\quad k\geq 1,
$$
when $d$ is a power of $2$. Then, since both DCT and WHT have fast algorithms of time complexity $O(d\log d)$, the matrix-vector multiplication $\Phi u$ and $\Phi^\top u$ can be calculated with time complexity $O(d\log d)$ for any $u\in\mathbb{R}^d$. Specifically, suppose $\Sigma\subseteq\{1,\ldots,d\}$ is the index set of the $Q$ rows of $B$ that form $\Phi$. Taking DCT as an example, for any $u\in\mathbb{R}^d$, we can computed $\Phi u$ as follows:
\begin{enumerate}[leftmargin=15pt,topsep=1pt,itemsep=0pt]
\item $\widehat{u}\leftarrow \sqrt{d/Q}\cdot\mathsf{FastDCT}(u)$,
\item $\widehat{u}_j\leftarrow 0$ for each $j\in {\Sigma^c}$.
\end{enumerate}
Then $\widehat{u}$ is a vector whose $Q$ nonzero entries record the values of each entry of $\Phi u$. Similarly, for any $u\in\mathbb{R}^Q$, we can compute $\Phi^\top u$ by the following steps:
\begin{enumerate}[leftmargin=15pt, topsep=1pt,itemsep=0pt]
\item $\tilde{u}\leftarrow 0\in\mathbb{R}^d$,
\item $\tilde{u}_\Sigma\leftarrow u$, where $\tilde{u}_\Sigma$ denotes the subvector of $\tilde{u}$ with indices in $\Sigma$.
\item $v\leftarrow \sqrt{d/Q}\cdot \mathsf{FastInverseDCT}(\tilde{u})$
\end{enumerate}
Then we have $v=\Phi u$. Note that if we employ WHT instead of DCT, then both $\mathsf{FastDCT}$ and $\mathsf{FastInverseDCT}$ can be replaced by $\mathsf{FastWHT}$ since the WHT matrix is symmetric.


\bibliographystyle{IEEEtran}
\bibliography{bibfile.bib}

\end{document}